\documentclass[12pt]{article}
\topmargin=-1cm \textheight=24cm \textwidth=16cm \oddsidemargin=-1cm
\usepackage{amsmath,amssymb}
\usepackage{amsthm}

\def\PC{\mathcal{P}}

\def\BC{\mathcal{B}}

\def\E{\mathbf{E}}
\def\N{\mathbf{N}}
\def\P{\mathbf{P}}
\def\R{\mathbf{R}}
\def\Z{\mathbf{Z}}

\def\1{\mathbf{1}}

\def\al{\alpha}

\def\ep{\epsilon}
\def\de{\delta}
\def\ga{\gamma}

\newtheorem{prop}{Proposition}[section]
\newtheorem{theorem}{Theorem}[section]

\newcommand{\la}{\lambda}

\newcommand{\om}{\omega}

\newcommand{\Si}{\Sigma}
\newcommand{\Om}{\Omega}

\begin{document}
\title{On some models of many agent systems with competition and cooperation
}
\author{Vassili N. Kolokoltsov\thanks{Department of Statistics, University of Warwick,
 Coventry CV4 7AL UK,
  Email: v.kolokoltsov@warwick.ac.uk} and Oleg A. Malafeyev\thanks{St Petersburg University, Russia}}
\maketitle

\begin{abstract}
We review and develop a selection of models of systems with competition and cooperation, with origins in economics,
where deep insights can be obtained by the mathematical methods of
game theory. Some of these models were touched upon in authors' book 'Understanding Game Theory', World Scientific 2010, where also the necessary background on games can be found.
\end{abstract}

{\bf Key words}: Bi-matrix inspection games, territorial price building, tax evasion, von Neumann - Morgenstern set, two-action multi-agent games, nonlinear Markov games, colored (or rainbow) options, geometry of risk-neutral laws

\vspace{5mm}

We review and develop below some models of competition and cooperation, with origins in economics. These models
are linked only ideologically and methodologically, which allows one to read all sections almost independently. 

\section{Territorial price building}

Here we present a variation of the classical Cournot model of price building, where selling and production sites are spatially distributed.

Suppose there is a market that include selling sites
 $M_1,\ldots , M_m$, products $1,\ldots , K$ and production sites $1,...,L$.

There are $N$ agents that buy the products at the production sites
 and sell them in $M_1,\ldots , M_m$.

Assume that at the each production site $l$ there is a large stock
of all products available at the price $p_{kl}$ for the unit of the
product $k$, and that the transportation cost to $M_i$ of a unit of
the product $k$ is $\xi_{ikl}$.

 Let $Y^n_{ikl}$ denote the amount of the product $k$, which the
 agent $n$ brought to $M_i$ from the site $l$. Then the total amount of $k$th product in $M_i$
 is
 \[
 Y_{ik}=\sum\limits_{n=1}^NY^n_{ik}, \quad Y^n_{ik}=\sum_{l=1}^L Y^n_{ikl}.
 \]
  The selling price of the
 product $k$ in $M_i$ is the function of the total supply of
the product. Following the standard Cournot model assume that this price is given by the formula

\[
R_{ik}(Y_{ik})=(1-Y_{ik}/\alpha_{ik})\beta_{ik}, \quad Y_{ik} \in
[0, \alpha_{ik}],
\]
with some positive constants $\alpha_{ik}, \beta_{ik}$.

 Then the total income of the player $n$ can be calculated by the formula
\begin{equation}
 \label{ter0}
H_n=\sum_{i=1}^m\sum_{k=1}^K H_n(ik)
\end{equation}
with
 \begin{equation}
 \label{ter1}
H_n(ik)=Y^n_{ik}
  \left(1-\frac{Y_{ik}}{\alpha_{ik}}\right)\beta_{ik}
  -\sum_{l=1}^L Y^n_{ikl}(\xi_{ikl}+p_{kl}), \quad Y_{ik} \in
[0, \alpha_{ik}].
 \end{equation}
 Thus we have defined a symmetric $N$-person game $\Gamma^N$ with the payoffs given by \eqref{ter0}
 and with the
 strategies of each player $n$ being the arrays
 $(Y^n_{ikl})$ with positive entries bounded by $\alpha_{ik}$.

For simplicity we reduce our attention to the case of 2 players.

\begin{prop}
 \label{terr1}
The game $\Gamma^2$ has a symmetric Nash equilibrium. If for each
pair $(i,k)$ there exists a unique $l=q(i,k)$ where the minimum of
$\xi_{ikl}+p_{kl}$ is attained, then this equilibrium is unique and
the corresponding strategies of each player are given by
\begin{equation}
 \label{ter2}
 \hat Y_{ik}=\frac{1}{3} \delta_{q(i,k)}^l\alpha_{ik} \left(1-\frac{\xi_{ikl}+p_{kl}} {
 \beta_{ik}}\right),
 \end{equation}
 where $\delta_m^l$ denotes the usual Dirac symbol that equals 1 for
 $l=m$ and zero otherwise.
\end{prop}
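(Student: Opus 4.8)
The plan is to exploit the fact that the payoff \eqref{ter0} splits into a sum of independent blocks indexed by the pairs $(i,k)$, which reduces the whole game to a family of scalar Cournot duopolies. First I would observe that the variables $(Y^n_{ikl})$ belonging to distinct pairs $(i,k)$ never interact: each summand $H_n(ik)$ in \eqref{ter0}, displayed in \eqref{ter1}, depends only on the block $(Y^n_{ikl})_l$ of player $n$ and on the corresponding block of the opponent. Hence a profile is a Nash equilibrium if and only if, for every $(i,k)$ separately, the two blocks form an equilibrium of the one-block game with payoff \eqref{ter1}. I therefore fix $(i,k)$, abbreviate $c_l=\xi_{ikl}+p_{kl}$, and analyze this block on its own.

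The decisive computation is the gradient of player $1$'s block payoff. Using $Y_{ik}=Y^1_{ik}+Y^2_{ik}$ and $Y^1_{ik}=\sum_l Y^1_{ikl}$, one finds for every site $l$ that
\[
\frac{\pa H_1(ik)}{\pa Y^1_{ikl}}
 =\be_{ik}-\frac{\be_{ik}}{\al_{ik}}\bigl(2Y^1_{ik}+Y^2_{ik}\bigr)-c_l .
\]
The point to stress is that this marginal payoff depends on the site $l$ only through the additive term $-c_l$, the bracketed part being common to all $l$. Moreover the only second-order term in $H_1(ik)$ is $-(\be_{ik}/\al_{ik})(Y^1_{ik})^2$, so each block payoff, and hence $H_n$ itself, is concave in the player's own strategy. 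Concavity together with compactness and convexity of the strategy box gives existence of a Nash equilibrium by the standard concave-game / Kakutani argument, and since the game is symmetric the symmetrized best-response correspondence yields a symmetric equilibrium; this already proves the first assertion.

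Assume now the unique-minimizer hypothesis, so that $c_l$ attains its minimum at the single index $l=q(i,k)$. Because the common bracketed part above is independent of $l$, the marginal payoff is strictly largest at $l=q$, and the Kuhn--Tucker conditions for a best response (stationarity where a component is positive, nonpositive derivative where it is zero) force $Y^n_{ikl}=0$ for every $l\neq q$, for both players, in any equilibrium. The block thus collapses to a scalar Cournot duopoly in the quantities $Y^1_{ik}=Y^1_{ikq}$ and $Y^2_{ik}=Y^2_{ikq}$ with the single marginal cost $c_q$; this is exactly the source of the Dirac factor $\de^l_{q(i,k)}$ in \eqref{ter2}. Writing the two stationarity conditions and subtracting them cancels the common terms and yields $Y^1_{ik}=Y^2_{ik}$, so the equilibrium is necessarily symmetric and unique; substituting back gives $(3\be_{ik}/\al_{ik})\,\hat Y_{ik}=\be_{ik}-c_q$, i.e. the formula \eqref{ter2}.

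The step I expect to be the main obstacle is the passage from the gradient to the single-site conclusion: one must argue the complementarity at the unused sites rigorously rather than merely setting derivatives to zero, and must confirm that the candidate quantities stay in the interior of $[0,\al_{ik}]$ so that all constraints other than nonnegativity of the unused components are inactive, which holds as soon as $c_q<\be_{ik}$. In the same vein one should rule out asymmetric boundary equilibria in which a player supplies nothing: a short marginal-profit check at the zero flow shows such a deviation is strictly profitable whenever $c_q<\be_{ik}$, pinning down the interior symmetric profile as the only equilibrium. Treating the degenerate case $c_q\ge\be_{ik}$, where the equilibrium flow degenerates to zero, and verifying that a tie among the minimizers is precisely what allows a continuum of equilibria (and thus destroys uniqueness), would complete the picture.
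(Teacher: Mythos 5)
Your proposal is correct and follows essentially the same route as the paper: decompose the payoff into independent $(i,k)$ blocks, argue that any best reply ships product only through the cheapest site $q(i,k)$, and solve the resulting scalar Cournot first-order conditions, whose symmetric solution yields \eqref{ter2}. The extra touches you add --- abstract existence via concavity and Kakutani, the explicit KKT complementarity argument at the unused sites, the subtraction step showing every interior equilibrium is symmetric, and the flagged degenerate case $c_q \ge \be_{ik}$ --- merely make rigorous points that the paper treats informally (it asserts the min-cost-site reduction by inspection and obtains existence constructively from the explicit best-reply formula).
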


\begin{proof} Clearly looking for a best reply one has to maximize each
$H_n(ik)$ separately for each pair $(ik)$. It is also clear from
\eqref{ter1} that to have a best reply one has to bring the product
only from those sites that minimize $\xi_{ikl}+p_{kl}$. Hence given
$(Y_{ik}^1)$ to find the best reply of the second player one has to
find the maximum of the functions
\[
\bar H_2^{ik}(Y_{ikq(i,k)}^2)= Y^n_{ikq(i,k)})
   \left(1-\frac{Y^1_{ik}+Y^2_{ikq(i,k)}}{\alpha_{ik}}\right)\beta_{ik}
  -Y^n_{ikq(i,k)}(\xi_{ikq(i,k)}+p_{kq(i,k)})
  \]
 for each pair $(ik)$.
 Differentiating one gets
\[
\frac{d\bar H_2^{ik}(z)}{dz}
 =\left(1-\frac{Y^1_{ik}+z}{\alpha_{ik}}\right)\beta_{ik}
  -(\xi_{ikq(i,k)}+p_{kq(i,k)})-z\frac{\beta_{ik}}{\alpha_{ik}}=0
  \]
  for the best reply $z=Y^2_{ikq(i,k)}$,
  yielding
\begin{equation}
 \label{ter3}
 Y^2_{ikq(i,k)}=\frac{1}{2} \alpha_{ik} \left(1-\frac{Y^1_{ik}}{\alpha_{ik}}
 -\frac{\xi_{ikq(i,k)}+p_{kq(i,k)}} { \beta_{ik}}\right).
 \end{equation}
As the second derivatives of $\bar H_2^{ik}$ is negative this point
does really define a maximum (and not a minimum).
 By the symmetry one concludes that symmetric equilibrium strategy
 solves the equation
\begin{equation}
 \label{ter4}
 \hat Y_{ik}=\frac{1}{2} \alpha_{ik}\left(1-\frac {\hat Y_{ik}}{\alpha_{ik}}
 -\frac{\xi_{ikq(i,k)}+p_{kq(i,k)}}{\beta_{ik}}\right)
 \end{equation}
 and thus equals
 \eqref{ter2} as required.
 \end{proof}

\section{Bi-matrix inspection games}

In this and the next sections we develop some extensions of the classical inspection game.
For other similar models see \cite{AvKi},
 \cite{AvCa}, \cite{FeM}, \cite{KoMabook} and references therein.
 
The game is carried out between a trespasser (player I) and an
inspector(player II) in $n$-periods (or steps). We shall analyze a
'stop to abuses' version of this game when it lasts only till the
first time trespassing is discovered by the inspector.

Player I has 2 pure strategies: to break (violate) the law (B) or to
refrain from it (R). Player II has also 2 pure strategies: to check
the actions of player I (C) or to have a rest (R). If player I
chooses (R) he gets the legal income  $r>0$. If he chooses (B), he
obtains additionally the illegal surplus $s>0$. However, if his
illegal action is discovered by player II, the player I pays the
fine $f>0$ and the game is over.

If player II chooses (C) he spends the amount $c>0$ on this
procedure and can discover the trespassing of player I with the
probability $p(\overline{p}=1-p)$. If player I breaks the law
and this action is not discovered, the inspector loses the amount
$l>0$.

Consequently one step of this game can be described by the table

\begin{center}
\begin{tabular}{cc}
& Inspector \\
 Trespasser &
\begin{tabular}{|c|c|c|}
\hline
& Check (C) & Rest (R) \\
\hline
Break (B) & $-pf+\overline{p}(r+s), -(c+\overline{p}l)$ & $r+s, -l$ \\
\hline
Refrain (R) & $r,-c$ & $r,0$ \\
\hline
\end{tabular}
\end{tabular}
\end{center}

or shortly by the matrix

\begin{equation}
\label{Inspect3}
\left(
\begin{array}{cc}
  -pf+\overline{p}(r+s), -(c+\overline{p}l) & \qquad r+s, -l  \\
  \quad r, -c \quad & \quad r, 0 \quad \end{array}
  \right)
\end{equation}

It is natural to assume, that $c<pl$, so that the pair (B,R) is
not a Nash equilibrium (otherwise the inspector has no reasons at
all to conduct checks).

Let us say that a bi-matrix game {\it has a value}, if payoffs in
any of the existing Nash equilibrium are the same, the corresponding
pair of payoffs being called the {\it value of the game}\index{value
of the game}.

\begin{prop}
\label{leinspect} Assume $f,r,s,c,l>0$, $c<pl$. Then the game with
the matrix \eqref{Inspect3} has a value $V=(u,v)$. Moreover, 1) if
 $ s< s_{1}=\frac{p}{\overline{p}}(f+r)$, the unique Nash equilibrium
 is given by the pair of mixed strategies $(x,\overline{x})$, $(y,\overline{y})$,
 where
 \[
 x=\frac{c}{pl} , y=\frac{s}{p(f+r+s)};
 \quad u=r,v=-\frac{c}{p}.
 \]
 2) if $s>s_{1}$, the unique Nash equilibrium
 is the profile (B,C) and
 \[u=-pf +\overline{p}(r+s),v=-(c+\overline{p}l);
 \]
 3) if $s=s_1$, then  $u=r,v=-(c+\overline{p}l)$ and the Nash
 equilibria are given by all pairs $(X,C)$, where $X$ is any (pure or mixed)
 strategy of the trespasser.
\end{prop}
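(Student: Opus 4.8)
The plan is to treat \eqref{Inspect3} as a standard $2\times 2$ bimatrix game and to determine all Nash equilibria by a best-reply analysis. First I would parametrize the mixed strategies by $x=\P(\text{B})$ for the trespasser and $y=\P(\text{C})$ for the inspector, and write the two expected payoffs as functions of $(x,y)$. Subtracting the payoff of one pure action from the other for each player yields two affine ``switching'' functions: for the trespasser the difference (B minus R) against a given $y$ equals $s-yp(f+r+s)$, and for the inspector the difference (C minus R) against a given $x$ equals $xpl-c$. The signs of these two functions completely encode the two best-reply correspondences, so the whole proof reduces to reading off where they vanish and change sign. Existence of some equilibrium is automatic for a finite game, so the real content is the explicit description and the constancy of payoffs (the value).

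Second, I would locate the completely mixed equilibrium as the simultaneous zero of the two switching functions. The inspector's zero is $x^{*}=c/(pl)$, which lies in $(0,1)$ exactly because of the standing hypothesis $c<pl$; the trespasser's zero is $y^{*}=s/(p(f+r+s))$, which is automatically positive and is $<1$ precisely when $s\overline{p}<p(f+r)$, i.e. $s<s_1$. Thus in regime~1 the interior profile $(x^{*},\overline{x^{*}}),(y^{*},\overline{y^{*}})$ is a genuine equilibrium; substituting back, and using that each player is then indifferent, gives the trespasser's value $u=r$ and the inspector's value $v=-x^{*}l=-c/p$. For uniqueness I would check that none of the four pure profiles is an equilibrium in this regime, which is immediate from the signs of the switching functions ($c<pl$ rules out any profile in which the inspector rests against B, $s>0$ rules out $(\text{R},\text{R})$, and $s<s_1$ makes R the strict best reply against C).

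Third, for regime~2 ($s>s_1$) the trespasser's switching function $s-yp(f+r+s)$ is strictly positive for every $y\in[0,1]$ (its minimum, attained at $y=1$, equals $\overline{p}s-p(f+r)>0$), so B strictly dominates R and the trespasser plays B; against B the inequality $c<pl$ makes C the strict best reply. This forces the single profile $(\text{B},\text{C})$, whose payoffs read off \eqref{Inspect3} give case~2, and the domination argument gives uniqueness in one stroke.

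The main obstacle is the degenerate boundary $s=s_1$. Here the trespasser's switching function vanishes at $y=1$ (equivalently $y^{*}=1$), so the interior equilibrium collapses onto the edge $y=1$ and the trespasser's best reply to pure C becomes the whole segment $x\in[0,1]$. The delicate step is to decide which profiles $(X,\text{C})$ actually survive the inspector's optimality: since the inspector plays C, optimality of C needs $xpl-c\ge 0$, i.e. $\P_X(\text{B})\ge c/(pl)$, so I would expect the equilibrium set to be exactly $\{(X,\text{C}):\ \P_X(\text{B})\ge c/(pl)\}$. Along this set the trespasser's payoff is the constant $u=r$ (all of his actions tie against C at $s=s_1$), which makes the value well defined on the trespasser's side; the careful point is the inspector's payoff, which equals $-c-\P_X(\text{B})\,\overline{p}l$ and attains $-(c+\overline{p}l)$ only at the extreme profile $(\text{B},\text{C})$. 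Pinning this edge down precisely, and reconciling the reported value with the payoffs actually achieved across the whole equilibrium set, is where I would spend most of the effort.
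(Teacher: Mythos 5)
Your switching-function analysis is exactly the ``straightforward inspection'' that the paper's proof compresses into four lines, and your treatment of cases 1) and 2) is correct and complete. (Two trivial tightenings: in the uniqueness check of case 1 you never explicitly rule out $(R,C)$, where the inspector strictly prefers to deviate to R since checking a non-trespasser just costs $c$; and strictly speaking uniqueness also requires excluding half-mixed profiles, which in your framework is immediate because indifference of a player facing a pure opponent strategy would force one of the degenerate equalities $s=0$, $s=s_1$, $c=0$ or $c=pl$, all excluded in regime 1.)

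The point you flag in case 3) is not an obstacle for you to overcome: it is an error in the statement as printed, and your computation is the correct resolution. At $s=s_1$ every equilibrium must have $y=1$: against any $y<1$ the trespasser's switching function $s-yp(f+r+s)$ is strictly positive, so he plays pure B, whereupon $c<pl$ forces the inspector to C, a contradiction with $y<1$. Then C is a best reply to $X$ iff $\P_X(B)\,pl-c\ge 0$, so the equilibrium set is exactly $\{(X,C):\P_X(B)\ge c/(pl)\}$ and not all pairs $(X,C)$; for instance $(R,C)$ is not an equilibrium. Across this set the trespasser's payoff is indeed the constant $r$, but the inspector's payoff $-c-\P_X(B)\,\overline{p}l$ sweeps the whole interval $[-(c+\overline{p}l),\,-c/p]$, attaining $-(c+\overline{p}l)$ only at the extreme profile $(B,C)$. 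Hence at $s=s_1$ the game does not have a value in the paper's own sense (equal payoffs at every Nash equilibrium), and claim 3) should be corrected to the restricted equilibrium set; no reconciliation with the printed pair $(u,v)=(r,-(c+\overline{p}l))$ is possible, so do not spend effort trying to produce one.
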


\begin{proof} Clearly the only candidate for Nash equilibrium in pure
strategies is the pair $(B,C)$. This profile is an equilibrium if
and only if $s\ge s_{1}$. Formulae for $x$, $y$ follow from
 the standard expression for mixed strategy profiles in two-action two-player games. Other statements are checked by a straightforward inspection.
\end{proof}

Of course of greater interest is the analysis of a multi-step
version of this game. Let us consider the $n$-step game, where
during this time the player I can break the law at most $k$ times
and the player II can organize the check at most $m$ times. Assume
that after the end of each period (step), the result becomes known
to both players. Total payoff in $n$ steps equals the sum of payoffs
in each step. It is also assumed that all this information (rules of
the game) is available to both players.

Let us denote the game described above by $\Gamma_{k,m}(n)$.
 Let
$(u_{k,m}(n),v_{k,m}(n))$ be the value of this game. We then get the
following system of recurrent equations:
\[
(u_{k,m}(n),v_{k,m}(n))
\]
\begin{equation}
= {Val} \left(
\begin{array}{cc}
  -pf+\overline{p}(r+s+u_{k-1,m-1}(n-1)), & r+s+u_{k-1,m}(n-1),  \\
  -(c+\overline{p}l)+\overline{p}v_{k-1,m-1}(n-1) & -l+v_{k-1,m}(n-1)  \\
  r+u_{k,m-1}(n-1), -c+v_{k,m-1}(n-1) & r+u_{k,m}(n-1), v_{k,m}(n-1)
\end{array}
\right)
\label{Inspect4}
\end{equation}
(if all $\Gamma_{k,m}(n)$ have values, i.e. their equilibrium
payoffs are uniquely defined), with the boundary conditions ($m,n,k
\ge 0$):

\begin{equation}
(u_{0,m}(n),v_{0,m}(n))=(nr,0)  ; \label{Inspect5}
\end{equation}

\begin{equation}
(u_{k,0}(n),v_{k,0}(n))=(nr+ks,-kl)  ; \quad k\leq n,
\label{Inspect6}
\end{equation}
 reflecting the following considerations: if the trespasser is unable to break the law, the pair of
solutions (R,R) will be repeated over all periods; and if the
inspector is unable to check, the trespasser will commit the maximum
number of violations available.

Though $k \le n,m\ge n$, the form of the recurrent equations below
is slightly simplified if one allows all non-negative $k,m, n$
together with the agreement
\begin{equation}
(u_{k,m}(n),v_{k,m}(n))=(u_{k',m'}(n),v_{k',m'}(n)); k'=\min(k, n),
m'=\min(m, n) \label{Inspect7}
\end{equation}

Let us reduce our attention further to the game $\Gamma_{n,n}(n)$.
Let us write $U_{n}=u_{n,n}(n)$ and $V_{n}=v_{n,n}(n)$. Then
(\ref{Inspect4}) takes the form:
\begin{equation}
(U_{n},V_{n})= (U_{n-1},V_{n-1})+ {Val} M_{n},
\end{equation}

where
\begin{equation}
M_{n}= \left( \begin{array}{cc}
   \overline{p}(r+s)-p(f+U_{n-1}), -(c+\overline{p}l+pV_{n-1}) & \quad r+s, -l
   \\
   r, -c & \quad r, 0
\end{array}
 \right)
\end{equation}
\[
(n\geq 0; \quad U_{0}=V_{0}=0)
\]

For $n=1$ the game becomes the same as the game (\ref{Inspect3}),
and its solution is given by Proposition \ref{leinspect}.

Let us find the solution to the game $\Gamma_{2,2}(2)$. Plugging the
values $U_{1}$ and $V_{1}$ into $M_{2}$ yields
\[
M_{2}= \left( \begin{array}{cc}
   \overline{p}(r+s)-p(f+r), -\overline{p}l & r+s, -l  \\
   r, -c & r, 0
\end{array} \right) , \quad 0<s<s_{1};
\]
\[
M_{2}= \left( \begin{array}{cc}
   \overline{p}(-pf+\overline{p}(r+s)), -\overline{p}(c+\overline{p}l) & r+s, -l
   \\
   r, -c & r, 0
\end{array} \right) , \quad s>s_{1};
\]
\[
s_{2}=\frac{p}{\overline{p}}(f+r)+\frac{p}{\overline{p}^{2}}r.
\]

Direct calculations show that under the assumptions of Proposition
\ref{leinspect} the $M_{2}$ also has a value, and one can
distinguish three basic cases (equilibrium strategies are again
denoted by $(x,\overline{x})$, $(y,\overline{y})$):

   1) if $0<s<s_{1}$, then
   \[
   x=\frac{c}{pl+c} , y=\frac{s}{p(f+2r+s)} ;
   \quad U_{2}=2r,V_{2}=-\frac{c(2pl+c)}{p(pl+c)}
   \]

   2) if $s_{1}<s<s_{2}$, then
   \[
   x=\frac{c}{p(c+(1+\overline{p})l)},
   \quad  y=\frac{s}{p(\overline{p}f+(1+\overline{p})(r+s))} ;
   \]
   \[
   U_{2}=-pf+\overline{p}(r+s),
   \quad V_{2}=-(\frac{cl}{p(c+(1+\overline{p})l)}+c+\overline{p}l)
   \]

   3) if $s>s_{2}$, then
   \[
   U_{2}=(1+\overline{p})(-pf+\overline{p}(r+s)), \quad
   V_{2}=-(1+\overline{p})(c+\overline{p}l),
   \]
and the profile (B,C) is an equilibrium.

Analogously one can calculate the solutions for other $n>2$.

\section{Tax payer against a tax man}

This is a game between a tax payer (player I) and the tax police
(player II). Player I has 2 pure strategies: to hide part of the
taxes (H) or to pay them in full (P). Player II has also 2
strategies: to check player I (C) and to rest (R). Player I
gets the income $r$ if he pays the tax in full. If he chooses the
action (H), he gets the additional surplus $l$. But if he is caught
by player II, he has to pay the fine $f$.

In the profile (C,H) player II can discover the unlawful action
of player I with the probability $p(\overline{p}=1-p)$, so that $p$
can be called the efficiency of the police. Choosing (C), player
II spends $c$ on the checking procedure. Of course $l,r,f,c>0$.

Hence we defined a bi-matrix game given by the table

\begin{center}
\begin{tabular}{cc}
& Player II (Police) \\
  Player I &
\begin{tabular}{|c|c|c|}
\hline
& Check (C) & Rest (R) \\
\hline
Hide (H) & $r+\overline{p}l-pf, -c+pf-\overline{p}l$ & $r+l, -l$ \\
\hline
Pay (P) & $r,-c$ & $r,0$ \\
\hline
\end{tabular}
\end{tabular}
\end{center}

or shortly by the payoff matrix
\begin{equation}
\left( \begin{array}{cc}
  r+\overline{p}l-pf, -c+pf-\overline{p}l & r+l, -l  \\
  r, -c & r, 0
\end{array} \right) \label{Inspect11}
\end{equation}

 The candidates to the mixed equilibrium are the strategies
 $(\beta,\overline{\beta})$, $(\alpha,\overline{\alpha})$, where
\[
\alpha=\frac{a_{22}-a_{12}}{a_{11}-a_{12}-a_{21}+a_{22}}
=\frac{l}{p(l+f)}>0
\]
\[
\beta=\frac{b_{22}-b_{21}}{b_{11}-b_{12}-b_{21}+b_{22}}
=\frac{c}{p(l+f)}>0.
\]
In order to have these strategies well defined, it is necessary to
have $\alpha <1$ and $\beta <1$ respectively. By a direct inspection
one gets the following result.

\begin{prop}
\label{letax}
 1) If $c\ge p(f+l)$, the pair (H,R) is an equilibrium,
 and moreover the strategy (R) is dominant for the police
(even strictly, if the previous inequality is strict). 2) If
$c<p(f+l)$ and $fp\le \overline{p}l$, the pair (H,C) is an
equilibrium and the strategy (H) is dominant (strictly if the
previous inequality is strict). 3) If $c<p(f+l)$, $fp>
\overline{p}l$, then the unique Nash equilibrium is the profile of
mixed strategies $(\beta,\overline{\beta}), (\alpha,
   \overline{\alpha})$.
\end{prop}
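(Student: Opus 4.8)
The plan is to run a systematic case analysis driven entirely by when one pure strategy dominates the other for one of the players, and to tie these domination thresholds to the well-definedness conditions $\alpha<1$, $\beta<1$ already recorded before the statement. First I would read off the domination inequalities directly from \eqref{Inspect11}. Comparing the inspector's Check column with the Rest column entrywise, the pair $-c$ versus $0$ always favors Rest, while $-c+pf-\overline{p}l$ versus $-l$ shows that Rest weakly dominates Check exactly when $c\ge p(f+l)$. Symmetrically, comparing the payer's Hide row with the Pay row, the pair $r+l$ versus $r$ always favors Hide, while $r+\overline{p}l-pf$ versus $r$ shows that Hide weakly dominates Pay exactly when $\overline{p}l\ge pf$, i.e. $fp\le\overline{p}l$. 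Clearing denominators in the formulas for $\alpha,\beta$ gives the identities $\alpha<1\Leftrightarrow fp>\overline{p}l$ and $\beta<1\Leftrightarrow c<p(f+l)$, so the three stated regimes correspond precisely to (1) Rest dominant for the police, (2) Hide dominant for the payer, and (3) both mixing probabilities strictly interior.

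For case (1), under $c\ge p(f+l)$ the Rest column weakly dominates Check, so I would delete the Check column; in the reduced game the payer compares $r+l$ against $r$ and strictly prefers Hide, giving $(H,R)$, with strictness when $c>p(f+l)$. For case (2), under $c<p(f+l)$ together with $fp\le\overline{p}l$ the Hide row weakly dominates Pay, so I would delete the Pay row; in the reduced game the inspector compares $-c+pf-\overline{p}l$ against $-l$, and the standing hypothesis $c<p(f+l)$ makes Check strictly preferred, giving $(H,C)$, again strict when $fp<\overline{p}l$. Both of these are routine iterated-elimination arguments.

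For case (3), where $c<p(f+l)$ and $fp>\overline{p}l$, I would first exclude every pure profile one at a time: $(H,R)$ fails because $c<p(f+l)$ drives the inspector to Check; $(P,C)$ and $(P,R)$ fail because $l>0$ drives the payer to Hide; and the decisive one, $(H,C)$, fails because $fp>\overline{p}l$ now makes $r+\overline{p}l-pf<r$, so against Check the payer strictly prefers Pay. Rather than quote a structure theorem for uniqueness, I would argue directly: against any \emph{pure} strategy of one player the other is strictly non-indifferent in this regime (payer indifference against Check needs $fp=\overline{p}l$, against Rest needs $l=0$; inspector indifference against Hide needs $c=p(f+l)$, against Pay needs $c=0$), so any equilibrium must have both players mixing. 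The two indifference equations then pin down the probabilities uniquely as $(\beta,\overline{\beta})$ and $(\alpha,\overline{\alpha})$, which are genuine strategies precisely because $\alpha,\beta\in(0,1)$ here; this yields the unique completely mixed equilibrium.

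The calculations are all elementary, so the only point needing real care is the boundary bookkeeping. I must keep the weak/strict wording consistent—weak dominance on the closed inequalities $c\ge p(f+l)$ and $fp\le\overline{p}l$, strict dominance on the open ones—and check that the three regimes as written partition $\{l,r,f,c>0\}$ with each boundary hyperplane assigned to exactly one case. On $c=p(f+l)$ one has $\beta=1$ and on $fp=\overline{p}l$ one has $\alpha=1$, so the mixed candidate degenerates into the corresponding pure profile, which matches the weakly-dominated-strategy equilibrium of that case; verifying this consistency, not any substantive difficulty, is the main thing to get right.
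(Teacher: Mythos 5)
Your proposal takes the same route as the paper itself, which after deriving $\alpha=l/p(l+f)$ and $\beta=c/p(f+l)$ simply declares the result to follow ``by a direct inspection'': your dominance thresholds, the identifications $\alpha<1\Leftrightarrow fp>\overline{p}l$ and $\beta<1\Leftrightarrow c<p(f+l)$, and the reduction of cases 1) and 2) to weak dominance plus a best reply are exactly that inspection, carried out correctly, and your uniqueness argument in case 3) (no player can be indifferent against a pure strategy, hence any equilibrium is completely mixed, hence pinned down by the two indifference equations) is sound.

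There is, however, one concrete slip in case 3) that you should fix. You exclude $(P,C)$ ``because $l>0$ drives the payer to Hide,'' but in this regime $fp>\overline{p}l$ makes Hide strictly \emph{worse} than Pay against Check --- that is precisely why $(H,C)$ fails --- so the payer is perfectly content at $(P,C)$. The profile $(P,C)$ fails instead because the \emph{inspector} deviates: against Pay, Rest gives $0>-c$. Your own later list of non-indifference facts (``inspector indifference against Pay needs $c=0$'') contains the correct reason, so the proof is repaired simply by re-attributing this deviation; nothing else in the argument is affected.
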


{\bf Remarks}. 1. Consequently, in cases 1) and 2) the actions of
the police are not effective. 2. It is not difficult to show that
the equilibrium in case 3) is stable.

It is more interesting to analyze the game obtained by extending the
strategy space of player I by allowing him to choose the amount
 $l$ of tax evasion:
$l\in[0,l_{M}]$, where $l_{M}$ is the full tax due to player I.
For example, we shall assume that the fine is proportional to $l$,
i.e. $f(l)=nl$. Say, in the Russian tax legislation $n=0.4$. Under
these assumptions the key coefficients $\alpha,\beta$ take the form

\[
\alpha=\frac{1}{p(n+1)}, \quad \beta=\frac{c}{l}\frac{1}{p(n+1)}.
\]
Let $H_{I}(l)$ denote the payoff to player I in the equilibrium when
$l$ is chosen. One can distinguish two cases:

 1) $p>\frac{1}{n+1}\Longleftrightarrow \alpha<1$.
If
\[
l>l_{1}=\frac{c}{p(n+1)} \Longleftrightarrow \beta <1,
\]
then $(\beta,\overline{\beta}),(\alpha,
   \overline{\alpha})$ is a stable equilibrium. If
$l<l_{1} \Longleftrightarrow \beta >1$, then $(H,R)$ is a stable
equilibrium. Since
\[
H_{I}(l<l_{1})=r+l<r+l_{1},
\]
\[
H_{I}(l>l_{1})=\beta\alpha(r+\overline{p}l-pf) +
\beta\overline{\alpha}(r+l) + \overline{\beta}\alpha r +
\overline{\beta}\overline{\alpha}r
  = r + \beta l(\alpha\overline{p}+
\overline{\alpha}-\alpha pn),
\]
it follows that $H_{I}(l>l_{1})>H_{I}(l<l_{1})$ would be possible
whenever $\beta l(\alpha\overline{p}+ \overline{\alpha}-\alpha
pn)\geq l_{1}$. However $\alpha\overline{p}+
\overline{\alpha}-\alpha pn=0$, hence this is not the case.
Consequently $H_{I}(l>l_{1})<H_{I}(l<l_{1})$ and therefore player I
will avoid tax on the amount $l=l_{1}$.

2) $p<\frac{1}{n+1}\Longleftrightarrow  \alpha>1$.

 If
 $l>l_{1}\Longleftrightarrow \beta <1 $,
then $(H,C)$ is an equilibrium, and if $l<l_{1} \Leftrightarrow
\beta >1$, then $(H,R)$ is an equilibrium. Since
\[
H_{I}(l<l_{1})=r+l<r+l_{1}, \quad H_{I}(l>l_{1})=r+(1-p)l-pln,
\]
the choice  $l>l_{1}$ is reasonable for player I as
\[
l(1-p-pn) \geq l_{1}.
\]
Hence $H_{I}(l>l_{1})>H_{I}(l<l_{1})$ whenever
$l\geq\frac{l_{1}}{1-p(n+1)}$.

Consequently, if
\begin{equation}
 \label{Inspect12}
\frac{l_{1}}{1-p(n+1)}\leq l _{M},
\end{equation}
the equilibrium strategy for player I is $l=l_{M}$ and otherwise
$l=l_{1}$.

 One can conclude that in both cases it is profitable
  to avoid tax on the amount $l_{1}$, but as the efficiency of tax man increases, it becomes unreasonable
  to avoid tax on a higher amount.

Let us see which condition in the second case would ensure the
inequality \eqref{Inspect12} when the amount of tax avoidance is
$l_{M}$ in the equilibrium. Plugging $l_1$ in \eqref{Inspect12}
yields
\[
\frac{c}{p(n+1)(1-p(n+1))}\leq l_{M}.
\]
Denoting $x=p(n+1)<1$ one can rewrite it as
\begin{equation}
\label{Inspect13} x^{2}-x+\frac{c}{l_{M}}\leq 0.
\end{equation}
The roots of the corresponding equation are
\[
x_{1,2}=\frac{1\pm\sqrt{1-\frac{4c}{l_{M}}}}{2}.
\]
Hence for $c>l_{M}/4$ inequality \eqref{Inspect12} does not hold for
any $p$, and for $c\leq l_{M}/4$ the solution to \eqref{Inspect13} is
\[
x\in\left[\frac{1-\sqrt{1-\frac{4c}{l_{M}}}}{2};\frac{1-\sqrt{1+\frac{4c}{l_{M}}}}{2}\right].
\]
Thus for
\begin{equation}
c\leq\frac{l_{M}}{4}, \quad p\in\left
[\frac{1-\sqrt{1-\frac{4c}{l_{M}}}}{2(n+1)};\frac{1-\sqrt{1+\frac{4c}{l_{M}}}}{2(n+1)}\right]
\end{equation}
it is profitable to avoid tax payment on the amount $l_{M}$.

Let us consider a numeric example with $n=0.4$ so that
$1/(n+1)=0.714$. If, say, $c=1000$, then $l_{M}=100000$.

1) If $p<0.714$, the condition $c\leq l_{M}/4$ holds true, as
$1000<2500$. Hence, for $p\in[0.007;0.707]$ it is profitable to
avoid tax on the whole amount, i.e. 100000.

2) If $p>0.714$ it is profitable to avoid tax on the amount
$l_{1}=714.29$.

Hence if the efficiency of tax payment checks is $p<0.707$, it is
profitable to avoid tax on the whole amount of 100000, and if
 $p>0.707$, then not more than on 1010.

\section{Cooperative games versus zero-sum games} \label{nmsolution}

 The aim of this Section is to present a curious
connection between competition and cooperation by linking the solutions to cooperative games and lower
values of auxiliary zero-sum games.
 The following definitions are standard:

{\it Cooperative game with non-transferable utility} is a triple
$G=(I,v,H)$, where $I = \{1,2,\ldots,n\}$ is the set of players, $H$
is a non-empty compact set from $R^n$, and $v$ is a mapping from the
set of all coalitions (non-empty subset $S \subset I$) to the set of
non-empty closed subsets $v(S)\subset H$.

 For $x,y \in H$ one says that $x$ {\it dominates} $y$ ($x\succ y$),
 if there exists a coalition $S\subseteq I$ such that

1)  $x,y\in v(S)$;

2)  $x_i>y_i$ for any $i\in S $.

A {\it (von Neumann - Morgenstern) solution} to $G = (I,v,H)$ is called a subset
$V\subseteq H$ such that

1) {\it (internal stability)} there are no pairs of vectors from $V$
such that one of them dominates the other one;

2) {\it (external stability)} for any $y\in H \backslash V$ there
exists $x\in V$ such that $x\succ y$.

For $A\subset R^n$ let us denote by $A_{\varepsilon}$ the
$\varepsilon$-neighborhood of $A$, i.e. $A_{\varepsilon} = A +
B_{\varepsilon}$ with
$$
B_{\varepsilon} = \left\{ x\in R^n\left|\sum\limits ^n_{i=1} x^2_i <
\varepsilon \right.\right\}.
$$

A closed subset $V$ is called a $\varepsilon$-{\it solution}
whenever $V$ is internally stable and for any $y\in H\backslash
V_{\varepsilon}$ there exists $x\in V$ such that $x\succ y$.

It is clear that if a closed $A$ is a $\varepsilon$-solution in the
game $G=(I,v,H)$ for any $\varepsilon
>0$, the $A$ is a solution.

Consider the function
$$
L(x,y) = \max\limits_{S:x,y\in v(S)}\ \min\limits_{i\in S}\,(x_i -
y_i).
$$
It is clear that $L(x,y)>0 \Leftrightarrow x\succ y$. Let ${\cal
A}(G) = \{B\in 2^H: B {\hbox{ замкнутое, }}\max\limits_{x,y\in
B}\,L(x,y)=0\}$.

\begin{prop}
 \label{nmpro1}
  Let $\varepsilon >0$. The game $G=(I,v,H)$ has a
$\varepsilon$-solution if and only if
 \begin{equation}
\label{nm1}
 \sup\limits_{A\in {\cal A}(G)}\{ \min\limits_{y\in
H\backslash A_{\scriptstyle\varepsilon}}\max\limits_{x\in A}
L(x,y)\}>0.
\end{equation}
\end{prop}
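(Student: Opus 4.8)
The plan is to recast both sides of the claimed equivalence entirely in terms of the function $L$ and then match them. Recall that $x\succ y\iff L(x,y)>0$. Thus a set $V$ is internally stable precisely when $L(x,y)\le 0$ for all $x,y\in V$; since $L(x,x)=0$ for any $x$ lying in some $v(S)$, this is the same as $\max_{x,y\in V}L(x,y)=0$, i.e. $V\in{\cal A}(G)$. Likewise $V$ is externally $\varepsilon$-stable precisely when, for every $y$ in the set $H\setminus V_{\varepsilon}$ (which is compact, as $V_{\varepsilon}$ is open), there is $x\in V$ with $L(x,y)>0$, i.e. $\max_{x\in V}L(x,y)>0$. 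Writing $m(A):=\min_{y\in H\setminus A_{\varepsilon}}\max_{x\in A}L(x,y)$, with the convention $m(A)=+\infty$ when $A_{\varepsilon}\supseteq H$, the right-hand side of \eqref{nm1} is $\sup_{A\in{\cal A}(G)}m(A)$, so the proposition asserts that some $A\in{\cal A}(G)$ has $m(A)>0$ if and only if $\sup_{A}m(A)>0$.

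The direction $(\Leftarrow)$ is then immediate. If $\sup_{A}m(A)>0$, choose $A\in{\cal A}(G)$ with $m(A)>0$. Being in ${\cal A}(G)$, the set $A$ is compact and internally stable; and $m(A)>0$ says that $\max_{x\in A}L(x,y)\ge m(A)>0$ for every $y\in H\setminus A_{\varepsilon}$, so each such $y$ is strictly dominated by some $x\in A$. Hence $A$ is itself an $\varepsilon$-solution, external stability being vacuous in the degenerate case $m(A)=+\infty$. No passage to a closure is needed here precisely because the members of ${\cal A}(G)$ are already compact.

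For $(\Rightarrow)$, start from an $\varepsilon$-solution $V$. By the reformulation it lies in ${\cal A}(G)$ and satisfies $\phi(y):=\max_{x\in V}L(x,y)>0$ for every $y$ in the compact set $K:=H\setminus V_{\varepsilon}$. It then remains only to upgrade this pointwise positivity to $m(V)=\min_{y\in K}\phi(y)>0$, after which $\sup_{A}m(A)\ge m(V)>0$ closes the argument. This upgrade is the one genuinely nontrivial step, and it is where I expect the main obstacle to lie: a merely positive function on a compact set need not have positive minimum, so one must exploit the structure of $L$ rather than argue abstractly.

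The tool I would use is the semicontinuity built into $L$ by the finiteness of the set of coalitions and the closedness of the sets $v(S)$. Writing $L=\max_{S}L_{S}$, where $L_{S}(x,y)=\min_{i\in S}(x_i-y_i)$ on the closed set $v(S)\times v(S)$ and $L_S=-\infty$ off it, each $L_S$ is upper semicontinuous (being continuous on a closed set and $-\infty$ elsewhere), hence so is the finite maximum $L$; consequently $\psi_S(y):=\max_{x\in V\cap v(S)}\min_{i\in S}(x_i-y_i)$ is continuous on the compact set $K\cap v(S)$ and $\phi=\max_S\psi_S$. The plan is to note that the finitely many relatively open sets $\{y\in K\cap v(S):\psi_S(y)>0\}$ cover $K$, since every $y\in K$ is strictly dominated through some coalition, and then to extract from this finite cover a uniform margin $\delta>0$ with $\phi\ge\delta$ on $K$. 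The delicate point, and the crux of the whole proposition, is that these domination regions are relatively open only inside the closed sets $v(S)$ and not in $H$; turning the finite subcover into a single positive $\delta$ therefore forces one to control what happens as $y$ approaches the boundaries of the $v(S)$, and it is this interplay between the closed coalition-value sets and the strict inequalities defining $\succ$ that I expect to absorb essentially all of the work.
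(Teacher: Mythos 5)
Your two reformulations and your whole backward direction coincide with the paper's argument. The genuine gap is in the forward direction: you reduce it to showing that pointwise positivity of $\phi(y)=\max_{x\in V}L(x,y)$ on the compact set $K=H\backslash V_{\varepsilon}$ forces $\min_{y\in K}\phi(y)>0$, and then you do not prove this, only sketch a covering strategy and concede that the behaviour near the boundaries of the sets $v(S)$ is unresolved. That step cannot be completed along the route you propose, because the semicontinuity you invoke points the wrong way: a finite maximum of upper semicontinuous pieces makes $\phi$ upper semicontinuous, and an u.s.c.\ function on a compact set attains its maximum, not its minimum, so pointwise positivity is perfectly compatible with $\inf_{K}\phi=0$. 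Concretely, take two players, $x^{*}=(1,1)$, $y_{n}=(1-1/n,0)$, $y^{*}=(1,0)$, $H=\{x^{*}\}\cup\{y_{n}:n\ge 2\}\cup\{y^{*}\}$, and $v(\{1,2\})=H$, $v(\{2\})=\{x^{*},y^{*}\}$, $v(\{1\})=\{x^{*}\}$. For $\varepsilon<1$ the set $V=\{x^{*}\}$ is an $\varepsilon$-solution: it dominates each $y_{n}$ through the grand coalition (with margin $1/n$) and dominates $y^{*}$ through the coalition $\{2\}$ (with margin $1$). Yet $\phi(y_{n})=1/n\to 0$, so $\inf_{K}\phi=0$ and is not attained; your finitely many relatively open sets $U_{S}$ do cover $K$, but they carry no uniform margin --- exactly the failure you feared. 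Moreover, every internally stable closed set other than $\{x^{*}\}$ must omit $x^{*}$ and fails to dominate it (all values $L(\cdot,x^{*})$ equal $-1$), so with $\min$ read as $\inf$ the left-hand side of \eqref{nm1} equals $0$ in this game even though an $\varepsilon$-solution exists: the implication you are trying to establish is false, not merely difficult.

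The paper neither needs nor attempts the upgrade you are after. Its forward direction reads the $\min$ in \eqref{nm1} as an attained minimum, as the notation presupposes: once $\max_{x\in V}L(x,\bar y)>0$ for every $\bar y\in K$, the value of $\phi$ at a minimizing point is in particular positive, so the (attained) minimum is positive --- a tautology, not an analytic fact. Thus the paper's entire proof consists of the two reformulations you already carried out, and the step you single out as ``the crux absorbing essentially all of the work'' simply does not occur in it. What your analysis actually uncovers is a defect in the formulation rather than a hard step in the intended proof: nothing in the hypotheses guarantees that the minimum in \eqref{nm1} exists (in the example above it does not), and once it is weakened to an infimum the equivalence breaks. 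No covering or compactness argument can repair this; it would have to be fixed at the level of the statement, for instance by restricting to games for which the minimum is attained.
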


\begin{proof}
 Let \eqref{nm1} holds. Then there exist $A\subseteq H$ such that
 \begin{equation}
\label{nm2}
 \min\limits_{y\in{H\backslash
A_{\scriptstyle\varepsilon}}}\max\limits_{x\in A} L(x,y) > 0
 \end{equation}
and
 \begin{equation}
\label{nm3}
 \max\limits_{x,y\in A} L(x,y) = 0.
 \end{equation}
It follows from \eqref{nm2} that
 $\max\limits_{x\in A} L(x,{\overline y})> 0$
 for each ${\overline y}\in H\backslash A_{\varepsilon}$.
Consequently for arbitrary ${\overline y}\in H\backslash
A_{\varepsilon}$ one can find a ${\overline x}\in A$ such that
$L({\overline x},{\overline y}) > 0$, i.e. ${\overline x}\succ
{\overline y}$. It now follows from \eqref{nm3} that $L(x,y)\le 0$
for all $x,y\in A$, i.e. neither $x$ dominates $y$, nor vice versa.
Consequently the set $A$ is a $\varepsilon$-solution.

Now let $A$ is a $\varepsilon$-solution. Then $L(x,y)\le 0$ for all
$x,y\in A$, but $L(x,x) = 0$ implying \eqref{nm3}.

With $A$ being a $\varepsilon$-solution, for any ${\overline y}\in
H\backslash A_{\varepsilon}$ there exists a ${\overline x}\in A$
such that ${\overline x}\succ {\overline y}$, i.e. $L({\overline
x},{\overline y})>0$. Consequently $\max\limits_{x\in A}
L(x,{\overline y})>0$ for any ${\overline y}\in H\backslash
A_{\varepsilon}$ and therefore
\[
\min\limits_{y\in H\backslash
A_{\scriptstyle\varepsilon}}\max\limits_{x\in A} L(x,y)>0 .
\]
\end{proof}

Let us choose now a $\varepsilon >0$ and introduce a two-person
zero-sum game $\Gamma_{\varepsilon }(I,v,H) $, in which the first
player makes a first move by choosing a $A\in \cal A$ and then the
second player replies by choosing $y\in H\backslash
A_{\varepsilon}$. Finally the first player makes the third (and the
last) move by choosing $x\in A$. The income of the first player in
this game equals $L(x,y)$ (the second one gets $- L(x,y)$).

The following statement is a direct corollary of Proposition \ref{nmpro1}.

\begin{prop}
 \label{nmpro2}
 The game $G = (I,v,H) $ has a $\varepsilon$-solution if and only if
 the maximal guaranteed gain of the first plater in
  $\Gamma_{\varepsilon }(I,v,H) $ is positive.
  Moreover any
$A\in \cal A $ yielding such gain is a $\varepsilon$-solution to
$G$.
\end{prop}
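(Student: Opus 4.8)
The plan is to recognize this statement as nothing more than a reformulation of Proposition \ref{nmpro1} in the language of the auxiliary zero-sum game, so that essentially no new work is required beyond correctly identifying the lower value of $\Gamma_{\varepsilon}(I,v,H)$. First I would compute the maximal guaranteed gain of player I. Since $\Gamma_{\varepsilon}$ is a finite-horizon game of perfect information in which player I moves both first (choosing $A\in{\cal A}$) and last (choosing $x\in A$), with player II moving in between (choosing $y\in H\backslash A_\varepsilon$), the value is obtained by backward induction: for fixed $A$ and $y$ player I maximizes $L(x,y)$ over $x\in A$; anticipating this, player II selects $y$ so as to minimize $\max_{x\in A}L(x,y)$; finally player I selects $A$ so as to maximize the outcome. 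Hence the maximal guaranteed gain of player I equals
\[
\sup_{A\in{\cal A}(G)}\min_{y\in H\backslash A_\varepsilon}\max_{x\in A}L(x,y),
\]
which is exactly the left-hand side of \eqref{nm1}.

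With this identification the first assertion is immediate: by Proposition \ref{nmpro1} the quantity above is positive if and only if $G$ has a $\varepsilon$-solution, which is precisely the claimed equivalence.

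For the \emph{moreover} part I would read ``yielding such gain'' as yielding a positive gain, and argue as follows. Suppose $A\in{\cal A}$ satisfies $\min_{y\in H\backslash A_\varepsilon}\max_{x\in A}L(x,y)>0$. Then $A$ fulfills \eqref{nm2}, while the membership $A\in{\cal A}(G)$ gives \eqref{nm3}. These are exactly the two hypotheses invoked in the forward direction of the proof of Proposition \ref{nmpro1}: internal stability follows from \eqref{nm3}, since $L(x,y)\le 0$ for all $x,y\in A$, and external ($\varepsilon$-)stability follows from \eqref{nm2}, since for each $\overline y\in H\backslash A_\varepsilon$ one obtains an $\overline x\in A$ with $L(\overline x,\overline y)>0$, i.e. $\overline x\succ\overline y$. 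Thus $A$ is a $\varepsilon$-solution. I do not anticipate any genuine obstacle here; the only point that warrants care is verifying that the three-move perfect-information value collapses to the nested $\sup$--$\min$--$\max$ of \eqref{nm1}, after which the whole proposition is a direct corollary of Proposition \ref{nmpro1}.
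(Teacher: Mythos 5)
Your proposal is correct and follows exactly the route the paper intends: the paper offers no separate argument for this proposition beyond declaring it a direct corollary of Proposition \ref{nmpro1}, and your identification of player I's maximal guaranteed gain in the sequential game $\Gamma_{\varepsilon}(I,v,H)$ with the quantity $\sup_{A\in{\cal A}(G)}\min_{y\in H\backslash A_{\varepsilon}}\max_{x\in A}L(x,y)$ of \eqref{nm1} is precisely the missing bookkeeping. Your treatment of the ``moreover'' clause, re-using the forward direction of the proof of Proposition \ref{nmpro1} for any $A$ with positive guaranteed gain, is likewise the intended argument.
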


One can now define a two-player zero-sum game in the normal form
$N_{\varepsilon }(I,v,H)$, where the strategies of the first players
are the sets $A\in \cal A$ and the strategies of the second player
are the mappings $f: A\in {\cal A} \mapsto H\backslash
A_{\varepsilon}$. Let $F$ denote the set of the strategies of the
second player. Let the payoff to the first player in this game
$N_{\varepsilon }(I,v,H)$ be
$$
h(A,f) = \max\limits_{x\in A} L(x,f(A)).
$$

\begin{prop}
 \label{nmpro3}
 The game $N_{\varepsilon }(I,v,H)$ has a value, i.e.
$$
\sup\limits_{A\in\cal A}\inf\limits_{f\in F} h(A,f) =
\inf\limits_{f\in F}\sup\limits_{A\in \cal A} h(A,f).
$$
\end{prop}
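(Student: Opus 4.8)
The plan is to establish the two inequalities $\sup_{A}\inf_{f} h(A,f)\le \inf_{f}\sup_{A} h(A,f)$ and its reverse, where throughout $A$ ranges over ${\cal A}$ and $f$ over $F$. The first is the trivial (weak duality) inequality valid for any payoff: for every fixed $f$ and every $A$ one has $h(A,f)\ge \inf_{f'} h(A,f')$, hence $\sup_A h(A,f)\ge \sup_A\inf_{f'}h(A,f')$, and taking the infimum over $f$ on the left-hand side preserves this. All the content therefore lies in the reverse inequality, and the key is the product (``rectangular'') structure of the second player's strategy set $F$.

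First I would compute the maximin side. Since $h(A,f)=\max_{x\in A}L(x,f(A))$ depends on $f$ only through the single value $f(A)$, and since $f(A)$ may be taken to be an arbitrary point of $H\backslash A_\varepsilon$ (the values of $f$ at other arguments being irrelevant), one has
$$\inf_{f\in F} h(A,f)=\min_{y\in H\backslash A_\varepsilon}\max_{x\in A}L(x,y)=:\phi(A).$$
Here the infimum is actually attained: $L$ is continuous, being a maximum over the finitely many coalitions $S$ of the continuous functions $\min_{i\in S}(x_i-y_i)$; the set $A$ is compact, so $y\mapsto\max_{x\in A}L(x,y)$ is continuous; and $H\backslash A_\varepsilon$ is compact, being closed in the compact $H$ as $A_\varepsilon$ is open. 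Consequently $\sup_A\inf_f h(A,f)=\sup_{A\in {\cal A}}\phi(A)$, which is precisely the maximal guaranteed gain of the first player in $\Gamma_\varepsilon(I,v,H)$ appearing in Proposition \ref{nmpro2}.

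The reverse inequality then follows by pasting the pointwise optimizers into a single strategy. For each $A\in {\cal A}$ pick a point $y_A\in H\backslash A_\varepsilon$ attaining the minimum, so that $\max_{x\in A}L(x,y_A)=\phi(A)$, and define $f_*\in F$ by $f_*(A)=y_A$. Because $F$ consists of \emph{all} mappings $A\mapsto H\backslash A_\varepsilon$, this $f_*$ is a legitimate strategy, and by construction $h(A,f_*)=\phi(A)$ for every $A$. Hence $\inf_f\sup_A h(A,f)\le\sup_A h(A,f_*)=\sup_A\phi(A)=\sup_A\inf_f h(A,f)$. Combined with weak duality this yields equality, so the game has a value; moreover $f_*$ is an optimal strategy for the second player, and the common value equals $\sup_{A}\phi(A)$, the quantity whose positivity characterizes the existence of an $\varepsilon$-solution by Propositions \ref{nmpro1}--\ref{nmpro2}.

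I expect no serious obstacle: the whole mechanism is the rectangular nature of $F$, which lets the minimizing player respond optimally to every $A$ at once and thereby collapses the interchange of $\sup$ and $\inf$. The only steps needing care are the attainment of the inner minimum — handled by the compactness and continuity remarks above, and if one preferred to avoid attainment one could instead select, for each $\eta>0$, an $\eta$-optimal $f_\eta$ with $h(A,f_\eta)\le\phi(A)+\eta$ for all $A$ and let $\eta\to0$ — and the tacit assumption that $H\backslash A_\varepsilon$ is nonempty for the relevant $A$, so that $\phi(A)$ and the selection $y_A$ are well defined.
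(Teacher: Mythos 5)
Your proof is correct and follows essentially the same route as the paper: weak duality combined with the strategy $f^*$ (your $f_*$) that assigns to each $A$ a minimizer of $\max_{x\in A}L(x,y)$ over $y\in H\backslash A_{\varepsilon}$, closing the chain $\inf_f\sup_A h(A,f)\le \sup_A h(A,f^*)=\sup_A\inf_f h(A,f)$. One quibble: your continuity claim for $L$ is not quite right, since the set of admissible coalitions $\{S:\, x,y\in v(S)\}$ varies with $(x,y)$ and $v(S)$ is only closed, so $L$ is merely upper semicontinuous and attainment of the inner minimum is not automatic --- but the paper tacitly assumes attainment as well, and your $\eta$-optimal fallback makes the argument work regardless.
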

\begin{proof}
 Consider the strategy $f^*$ such that
$$
\min\limits_{y\in H\backslash
A_{\scriptstyle\varepsilon}}\max\limits_{x\in A} L(x,y) =
\max\limits_{x\in A} L(x,f^*(A))
$$
for any $A\in \cal A$.

Then
$$
\sup\limits_{A\in \cal A}\inf\limits_{f\in F} h(A,f)\le
\inf\limits_{f\in F}\sup\limits_{A\in \cal A} h(A,f)\le
\sup\limits_{A\in \cal A} h(A,f^*) =
$$
$$
= \sup\limits_{A\in \cal A}\max\limits_{x\in A} L(x,f^*(A)) =
\sup\limits_{A\in \cal A}\min\limits_{y\in H\backslash
A_{\scriptstyle\varepsilon}}\max\limits_{x\in A} L(x,y)=
 \sup\limits_{A\in \cal A}\inf\limits_{f\in F} h(A,f).
$$
\end{proof}

\begin{theorem} (\cite{Ma1})
 \label{nmpro4}
 The game $G = (I,v,H)$ has a $\varepsilon$-solution if and only if
$$
 \sup\limits_{A\in\cal A}\inf\limits_{f\in F} h(A,f)>0
$$
holds for the game $N_{\varepsilon }(I,v,H)$.
\end{theorem}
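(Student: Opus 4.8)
The plan is to obtain the statement as an almost immediate consequence of Proposition \ref{nmpro1}, by showing that the maximin value of the normal-form game $N_\varepsilon(I,v,H)$ coincides with the quantity appearing on the left-hand side of \eqref{nm1}. In other words, rather than analysing $N_\varepsilon(I,v,H)$ afresh, I would reduce the $\sup_A\inf_f$ expression to the $\sup_A\{\min_y\max_x L\}$ expression already characterised in Proposition \ref{nmpro1}, and then simply quote that proposition.

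First I would fix a strategy $A\in\cal A$ of the first player and evaluate the inner infimum $\inf_{f\in F}h(A,f)$. The crucial observation is that a strategy $f\in F$ enters the payoff $h(A,f)=\max_{x\in A}L(x,f(A))$ only through its value $f(A)$, and that as $f$ ranges over $F$ this value ranges freely over all of $H\backslash A_\varepsilon$ (one is free to prescribe $f(A)$ arbitrarily in $H\backslash A_\varepsilon$ and define $f$ however one likes on the remaining sets). Hence
\[
\inf_{f\in F}h(A,f)=\inf_{y\in H\backslash A_\varepsilon}\max_{x\in A}L(x,y)=\min_{y\in H\backslash A_\varepsilon}\max_{x\in A}L(x,y),
\]
where the passage from $\inf$ to $\min$ uses compactness of $H\backslash A_\varepsilon$ (a closed subset of the compact set $H$) together with the regularity of $L$. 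This is exactly the selection of the optimal reply $f^*$ already exploited in the proof of Proposition \ref{nmpro3}, so no new work is needed here.

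Taking the supremum over $A\in\cal A$ then yields
\[
\sup_{A\in\cal A}\inf_{f\in F}h(A,f)=\sup_{A\in{\cal A}(G)}\Big\{\min_{y\in H\backslash A_\varepsilon}\max_{x\in A}L(x,y)\Big\},
\]
so the left-hand side is strictly positive if and only if the right-hand side is; and by Proposition \ref{nmpro1} the positivity of the right-hand side is equivalent to the existence of a $\varepsilon$-solution for $G$. This closes the loop. The only points I expect to require care are the standing hypotheses guaranteeing that the inner minimum is attained — namely compactness of $H\backslash A_\varepsilon$ and enough continuity of $L$ — and the degenerate situation in which $H\backslash A_\varepsilon=\varnothing$ for some $A$, where external stability is vacuous and such an $A$ must be handled by convention (excluded, or treated as automatically winning). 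These are bookkeeping issues rather than a genuine obstacle, since the real content of the theorem is already carried by Propositions \ref{nmpro1} and \ref{nmpro3}.
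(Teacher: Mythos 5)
Your proof is correct and is essentially the paper's own argument: the paper proves Theorem \ref{nmpro4} with the one-liner ``Follows from Propositions \ref{nmpro1}, \ref{nmpro3}'', and the identity $\inf_{f\in F}h(A,f)=\min_{y\in H\backslash A_{\varepsilon}}\max_{x\in A}L(x,y)$ that you establish is precisely the step hidden in the chain of equalities inside the proof of Proposition \ref{nmpro3} (the choice of the optimal reply $f^*$). If anything, your write-up is slightly more explicit than the paper's, since the statement of Proposition \ref{nmpro3} by itself (mere existence of a value) does not identify that value with the expression in \eqref{nm1}; your argument supplies exactly that identification before invoking Proposition \ref{nmpro1}.
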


\begin{proof}
Follows from Proposition \ref{nmpro1}, \ref{nmpro3}.
\end{proof}

\section{Stability for two-action multi-agent games}

One of the most popular game in recent game-theoretic literature is the so called minority game, see \cite{Coo}.
It represents a particular game of many agents with each agent having two strategies.
With this motivation, we are going to analyze here the stability property of games with two actions of each player.
 
To prepare the stage for our analysis, let us consider a general non-symmetric game $\Gamma$ of $m$ agents, where each agent $j$ has $n_j$ strategies $\{s_j^k\}$,
 $k=1,\cdots, n_j$, and receives payoffs $\Pi_j(s_1^{j_1}, s_2^{j_2}, \cdots, s_m^{j_m})$ at a profile
 $(s_1^{j_1}, s_2^{j_2}, \cdots, s_m^{j_m})$. Then, for a mixed strategy profile $\sigma=(\sigma_1, \sigma_2,...,\sigma_m)$ with
  \[
\sigma_1=(x_1^1,...,x_{n_1}^1), \quad
\sigma_2=(x_1^2,...,x_{n_2}^2), \quad ..., \quad
\sigma_m=(x_1^m,...,x_{n_m}^m)
\]
the payoff of agent $j$ becomes
\begin{equation}
 \label{220}
\Pi_i(\sigma_1, \sigma_2,...,\sigma_m)
 =\sum_{j_1=1}^{n_1}\sum_{j_2=1}^{n_2}...\sum_{j_m=1}^{n_m}
x^1_{j_1}x^2_{j_2}...x^m_{j_m}\Pi_i(s_1^{j_1},s_2^{j_2},...,s_m^{j_m}).
\end{equation}

 The Replicator Dynamics (RD) equations have the form

\begin{equation}
\label{247} \dot x_i^j= (\Pi_j(s^i_j,\sigma_{-j})-\Pi_j
(\sigma_j,\sigma_{-j})) x_i^j, \quad i=1,...,n_j,\, j=1,...,m,
\end{equation}
where $\sigma_{-j}$ denotes the collection of the strategies of all players in $\sigma$
others than $j$. This system describes an evolution of the behavior
of the players applying a try-and-error method of shifting the
strategies in the direction of a better payoff. As is well known, a Nash equilibrium is a fixed point of system \eqref{247}. The
r.h.s of \eqref{247} is sometimes called the Nash vector field of the
game.

Recall that a Nash equilibrium for a game $\Gamma$ is called {\it asymptotically
 stable, neutrally stable or unstable in the Lyapunov sense\index{Lyapunov stability} (or dynamically)} if it
 so for the corresponding dynamics \eqref{247}. (This means
 roughly speaking that if starting with strategies near the
 equilibrium, players would adjust their strategies in the direction of
 better payoffs, their strategies would converge to this equilibrium.)
On the other hand, a Nash equilibrium $(\sigma_1,...,\sigma_m)$ in such game is called {\it structurally
stable}\index{structural stability} if for arbitrary $\epsilon >0$
there exists a $\delta>0$ such that for all games $\tilde \Gamma$
with the same number of players and pure strategies and with payoffs
$\tilde \Pi$ that differ from $\Pi$ no more than by $\delta$, i.e.
such that
\begin{equation}
\label{263}
 |\Pi_i(s_1^{j_1},s_2^{j_2},...,s_m^{j_m})
 -\tilde \Pi_i(s_1^{j_1},s_2^{j_2},...,s_m^{j_m})|<\delta
 \end{equation}
 for all $i,s_1^{j_1},s_2^{j_2},...,s_m^{j_m}$, there
exists a Nash equilibrium $(\tilde \sigma_1,...,\tilde \sigma_m)$
for the game $\tilde \Gamma$ such that $|\tilde
\sigma_j-\sigma_j|<\epsilon$ for all $j=1,...,m$.
It makes sense also to speak about structural stability of
dynamically stable or unstable equilibria, i.e. a dynamically stable
(or unstable) Nash equilibrium $(\sigma_1,...,\sigma_m)$ in a game
$\Gamma$ is called {\it structurally stable} if for arbitrary
$\epsilon
>0$ there exists a $\delta>0$ such that for all games $\tilde
\Gamma$ with the same number of players and pure strategies and with
payoffs $\tilde \Pi$ that differ from $\Pi$ no more than by $\delta$
(i.e. \eqref{263} holds), there exists a dynamically stable
(respectively unstable) Nash equilibrium $(\tilde
\sigma_1,...,\tilde \sigma_m)$ for the game $\tilde \Gamma$ such
that $|\tilde \sigma_j-\sigma_j|<\epsilon$ for all $j=1,...,m$.

 The notion of stability is closely related to another important notion
of a {\it generic property}: a property
(object or characteristics) in a class of structures parametrized by
a collection of real numbers $s$ from a given subset $S$ of a
Euclidean space is called {\it generic} if it holds for $s$ from a
subset $\tilde S\subset S$ that is both open (which means that if
$s_0\in \tilde S$, then all $s\in S$ that are closed enough to $s_0$
belong to $\tilde S$ as well, i.e. the property of being in $\tilde
S$ is structurally stable in any point $s\in \tilde S$) and dense
(which means that for any $s_0$ there exists an $s\in \tilde S$ that
is arbitrary close to $s_0$, i.e. the negation of being in $\tilde
S$ is nowhere structurally stable).

As mentioned above, we are going to concentrate on the class
$\Gamma_n^2$ of mixed strategy extensions of games of $n$ players
each having only two strategies.

Let $A^i_{j_1,...,j_n}$ denote the
payoff to $i$ under pure profile $\{j_1,...,j_n\}$, $j_k=1,2$. A
mixed strategy profile can be described by families
\[
\sigma_1=(x_1,1-x_1), \sigma_2=(x_2,1-x_2),...,
\sigma_n=(x_n,1-x_n).
\]

Equations \eqref{247} can be written in terms of $x_1,...,x_n$
yielding (check it!)
\begin{equation}
\label{264} \dot x_i=x_i(1-x_i) \sum _{I\in \{1,...,n\}\setminus i}
 \tilde A^i_I \prod_{k\in I}x_k\prod_{k\notin I}(1-x_k), \quad
 i=1,...,n,
\end{equation}
where
\[
 \tilde A^i_I=
 A^i_{j_1...j_{i-1}1j_{i+1}...j_n}-A^i_{j_1...j_{i-1}2j_{i+1}...j_n}
 \]
 with $j_k=1$ whenever $k\in I$ and $j_k=2$ otherwise.
 Hence pure mixed (i.e. with all probabilities being positive) Nash
equilibria for a game in $\Gamma_n^2$ are given by vectors
$x^{\star}=(x_1^{\star},...,x_n^{\star})$ with coordinates from
$(0,1)$ solving the following system of $n$ equations
\begin{equation}
\label{265} \sum _{I\in \{1,...,n\}\setminus i}
 \tilde A^i_I \prod_{k\in I}x_k\prod_{k\notin I}(1-x_k)=0, \quad
 i=1,...,n.
\end{equation}
 In particular
 for $n=3$, denoting $x_1,x_2,x_3$ by $x,y,z$ and arrows of payoffs $A^1$,
 $A^2$, $A^3$ by $A,B,C$ yields for system \eqref{264} the following explicit form
\begin{equation}
\label{266}
  \left\{
  \begin{array}{lr}
  \dot x=x(1-x)(a+A_2y+A_3z+Ayz), \quad 0\le
x\le 1 \\
 \dot y=y(1-y)(b+B_1x+B_3z+Bxz), \quad 0\le y\le 1 \\
\dot z=z(1-z)(c+C_1x+C_2y+Cxy), \quad 0\le z\le 1
 \end{array}
 \right.
\end{equation}
as well as the form
\begin{equation}
\label{2661}
\left\{
 \begin{array}{lr}
  a+A_2y+A_3z+Ayz=0 \\
  b+B_1x+B_3z+Bxz =0 \\
 c+C_1x+C_2y+Cxy = 0
 \end{array}
 \right.
\end{equation}
for system \eqref{265},
 where
\[
a=A_{122}-A_{222}, \quad A_2=A_{112}-A_{212}-a, \,
A_3=A_{121}-A_{221}-a,
  \]
\[
A=A_{111}-A_{211}-a-A_2-A_3,
 \]
 and the coefficients in other two lines are defined analogously.

 Assuming $x^{\star}=(x_1^{\star},...,x_n^{\star})$
 solves \eqref{265}, it is  convenient to rewrite system \eqref{264} in terms of
 the deviations from the equilibrium
 $\xi_i=x_i-x_i^{\star}$. One then sees by inspection that the matrix of linear
 approximation (the Jacobian matrix) $J^{\star}$ has the entries
\begin{equation}
\label{267} J^{\star}_{ij}=x_i^{\star}(1-x_i^{\star})\sum _{I\in
\{1,...,n\}\setminus \{i,j\}}
 (\tilde A^i_{I\cup j}-\tilde A^i_I) \prod_{k\in I}x_k^{\star}\prod_{k\notin (I\cup \{i,j\})}(1-x_k^{\star})
 \end{equation}
 for $i\neq j$ and $J^{\star}_{ii}=0$ for all $i$.

\begin{theorem}
\label{t109} Let $J^{\star}$ be the Jacobian matrix (described in
Exercise 9.12) of a pure mixed equilibrium
$x^{\star}=(x_1^{\star},...,x_n^{\star})$
 solving \eqref{265}. If at least one of the eigenvalues of $J^{\star}$
 has a non-vanishing real part, then $x^{\star}$ is unstable in
 Lyapunov sense (i.e. dynamically). In particular, if $n$ is an odd
 number, then a necessary condition for the Liapunov stability of
 $x^{\star}$ is the degeneracy of $J^{\star}$, that is $\det
 J^{\star}=0$.
\end{theorem}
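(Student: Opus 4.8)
The plan is to exploit the single structural feature of $J^{\star}$ recorded in \eqref{267}, namely that its diagonal vanishes, $J^{\star}_{ii}=0$ for all $i$. This forces $\operatorname{tr} J^{\star}=0$, and since the trace equals the sum of the eigenvalues (counted with algebraic multiplicity), the eigenvalues $\la_1,\dots,\la_n$ of $J^{\star}$ satisfy $\sum_i \la_i=0$, whence $\sum_i \operatorname{Re}\la_i=0$. This vanishing-trace identity is the whole engine of the argument.

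For the first assertion I would argue as follows. Suppose some eigenvalue, say $\la_1$, has $\operatorname{Re}\la_1\neq 0$. If $\operatorname{Re}\la_1>0$ we are done at once; if $\operatorname{Re}\la_1<0$, then because the real parts sum to zero the remaining eigenvalues have real parts summing to $-\operatorname{Re}\la_1>0$, so at least one of them has strictly positive real part. In either case $J^{\star}$ possesses an eigenvalue with positive real part. Now the dynamics \eqref{264} is a smooth (indeed polynomial) autonomous vector field on the open cube $(0,1)^n$, and $x^{\star}$ is an interior fixed point; thus the classical linearization (instability) theorem applies, and the presence of an eigenvalue with positive real part implies that $x^{\star}$ is unstable in the Lyapunov sense.

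For the second assertion I would take the contrapositive and show that for odd $n$ the condition $\det J^{\star}\neq 0$ forces instability. Since $J^{\star}$ is a real matrix, its characteristic polynomial has real coefficients, so its non-real eigenvalues occur in complex-conjugate pairs; as $n$ is odd, the number of real eigenvalues is odd, and in particular there is at least one real eigenvalue. If moreover $\det J^{\star}=\prod_i\la_i\neq 0$, then every eigenvalue is nonzero, so that real eigenvalue is a nonzero real number and hence has non-vanishing real part. By the first part of the theorem $x^{\star}$ is then unstable. Consequently Lyapunov stability of $x^{\star}$ forces $\det J^{\star}=0$, as claimed.

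The argument is short, and the only point requiring genuine care is the invocation of the linearization theorem: one must observe that after the reduction to the coordinates $x_1,\dots,x_n$, equation \eqref{264} is a $C^1$ autonomous system on a neighborhood of $x^{\star}$ inside $(0,1)^n$, so the standard result that a positive-real-part eigenvalue of the Jacobian yields an unstable equilibrium is directly applicable. I do not anticipate a serious obstacle; the entire content reduces to the observation $\operatorname{tr} J^{\star}=0$ together with the conjugate pairing of complex eigenvalues for odd $n$.
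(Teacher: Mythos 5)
Your proof is correct and follows essentially the same route as the paper: the vanishing diagonal forces $\operatorname{tr} J^{\star}=0$, so a nonzero real part somewhere yields an eigenvalue with positive real part and hence instability by linearization, while for odd $n$ the conjugate pairing of complex eigenvalues shows nondegeneracy would force a nonzero real eigenvalue. Your version is if anything slightly more careful than the paper's, since you state explicitly the contrapositive structure of the second claim and the applicability of the linearization instability theorem at the interior fixed point.
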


\begin{proof} From \eqref{267} one deduces that $J^{\star}$ has zeros on the
main diagonal. Hence the sum of its eigenvalues vanishes.
Consequently if there exists an eigenvalue with a non-vanishing real
part there should necessarily exist also an eigenvalue with a
positive real part, which implies instability. As eigenvalues with
vanishing real parts appear as pairs of conjugate imaginary numbers,
it follows that in case of odd $n$ the fact that all real parts
vanish implies that zero should be an eigenvalue, i.e. the
degeneracy of $J^{\star}$.
\end{proof}

Let us write down the condition $\det
 J^{\star}=0$ explicitly (i.e. in terms of the payoff coefficients) for the case
 $n=3$.

 (i) From \eqref{267} the condition $\det
 J^{\star}=0$ writes down as
\begin{equation}
\label{268}
 (A_2+Az^{\star})(B_3+Bx^{\star})(C_1+Cy^{\star})
  +(B_1+Bz^{\star})(C_2+Cx^{\star})(A_3+Ay^{\star})=0.
 \end{equation}

 (ii) Solving \eqref{2661}
 by expressing $y,z,$ in terms of $x$ and putting this in the first equation
 leads to the quadratic equation:
 \begin{equation}
\label{269}
 v(x^{\star})^2+ux^{\star}+w=0,
 \end{equation}
 where
 \[
 w=aC_2B_3+cbA-bA_3C_2-cA_2B_3, v=aBC+AB_1C_1-BA_2C_1-CA_3B_1,
 \]
 \[
 u=a(BC_2+CB_3)+b(AC_1-CA_3)+c(AB_1-BA_2)-A_2B_3C_1-A_3B_1C_2.
 \]

 (iii) Using the system that is solved by
 $x^{\star},y^{\star},z^{\star}$ to express $y^{\star}z^{\star}$
 as a linear function of $y^{\star},z^{\star}$ allows to rewrite
 \eqref{268} as a quadratic equation
 (in $x^{\star},y^{\star},z^{\star}$). Expressing the quadratic terms of
 this equation again via linear terms leads to the equation on $x^{\star}$ only:
 \[
 u+2x^{\star}v=0.
 \]
 Comparing this with \eqref{269} leads to the conclusion that
 \eqref{268} is equivalent to the equation
\begin{equation}
\label{270}
 u^2-4vw=0,
 \end{equation}
 which is a polynomial homogeneous equation of the sixth order in
 coefficients $a,A,A_2,A_3$, $b,B,B_1,B_3$, $c,C,C_1,C_2$.

 {\bf Corollary.} The property to have a dynamically unstable pure
 mixed equilibrium is generic among the games of type $\Gamma^2_3$
 that have pure mixed equilibria. More precisely, apart from the
 games from the (algebraic of the sixth order) manifold $M$ described
 by equation \eqref{270} pure mixed equilibria are always dynamically
 unstable and structurally stable (as dynamically unstable
 equilibria).

\begin{prop}
 Under the assumption
\eqref{270} (or equivalently \eqref{268}) there exist $\alpha, \beta,
\gamma$
  such that a function $V$ of the (relative
 entropy) form
 \begin{equation}
\label{271}
 \alpha [x^{\star} \ln x +(1-x^{\star}) \ln (1-x)]
 +\beta [y^{\star} \ln y +(1-y^{\star}) \ln (1-y)]
 +\gamma [z^{\star} \ln z +(1-z^{\star}) \ln (1-z)]
 \end{equation}
is a first integral for \eqref{266}, i.e. it does not change along
the trajectories of \eqref{266} ($dV/dt=0$) if and only if the
condition
\begin{equation}
\label{272}
 AB_1C_1+aBC= BA_2C_1+CA_3B_1
 \end{equation}
 holds. In the latter case
 \[
 V=(B_1+Bz^{\star})(C_1+Cy^{\star}) [x^{\star} \ln x +(1-x^{\star}) \ln (1-x)]
 \]
 \[
 -(A_2+Az^{\star})(C_1+Cy^{\star}) [y^{\star} \ln y +(1-y^{\star}) \ln (1-y)]
 \]
 \[
 -(A_3+Ay^{\star})(B_1+Bz^{\star}) [z^{\star} \ln z +(1-z^{\star}) \ln (1-z)]
 \]
 is an integral.
 \end{prop}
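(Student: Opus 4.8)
The plan is to differentiate $V$ from \eqref{271} along the flow \eqref{266} and to read off the algebraic conditions on $(\alpha,\beta,\gamma)$ that force $dV/dt$ to vanish identically. The decisive simplification is that each logarithmic block telescopes against the replicator factor: since
\[
\frac{d}{dt}\bigl[x^\star\ln x+(1-x^\star)\ln(1-x)\bigr]=\frac{x^\star-x}{x(1-x)}\,\dot x ,
\]
the factor $x(1-x)$ in $\dot x$ cancels, and writing $P_1=a+A_2y+A_3z+Ayz$ together with the analogous $P_2,P_3$ for the two remaining brackets of \eqref{266} one gets the clean expression
\[
\frac{dV}{dt}=\alpha\,(x^\star-x)P_1+\beta\,(y^\star-y)P_2+\gamma\,(z^\star-z)P_3 .
\]

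I would then pass to the deviations $\xi=x-x^\star$, $\eta=y-y^\star$, $\zeta=z-z^\star$ and use that $(x^\star,y^\star,z^\star)$ solves \eqref{2661}, so that $P_1,P_2,P_3$ vanish at the equilibrium. Subtracting the (zero) equilibrium values and expanding $yz-y^\star z^\star=z^\star\eta+y^\star\zeta+\eta\zeta$ gives the exact identity $P_1=(A_2+Az^\star)\eta+(A_3+Ay^\star)\zeta+A\eta\zeta$, and cyclically for $P_2,P_3$. Hence $dV/dt$ is a polynomial in $\xi,\eta,\zeta$ with no constant and no linear part: only the monomials $\xi\eta$, $\xi\zeta$, $\eta\zeta$, $\xi\eta\zeta$ survive. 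Abbreviating $a_2=A_2+Az^\star$, $a_3=A_3+Ay^\star$, $b_1=B_1+Bz^\star$, $b_3=B_3+Bx^\star$, $c_1=C_1+Cy^\star$, $c_2=C_2+Cx^\star$, the requirement $dV/dt\equiv0$ becomes the four homogeneous linear equations
\[
\alpha a_2+\beta b_1=0,\qquad \alpha a_3+\gamma c_1=0,\qquad \beta b_3+\gamma c_2=0,\qquad \alpha A+\beta B+\gamma C=0 .
\]

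The final step is to solve this system. Solving the first two for $\beta=-\alpha a_2/b_1$ and $\gamma=-\alpha a_3/c_1$ and substituting into the third reproduces exactly $a_2b_3c_1+a_3b_1c_2=0$, which is the standing hypothesis \eqref{268} (equivalently \eqref{270}); this is precisely what guarantees that a nontrivial triple exists. Feeding the same $\beta,\gamma$ into the fourth equation and clearing $b_1c_1$ turns it into $Ab_1c_1-Ba_2c_1-Ca_3b_1=0$, and the one genuinely delicate computation is to show that this collapses to the star-free relation \eqref{272}. Expanding the three products, the $A$-proportional terms that are linear in $y^\star,z^\star$ cancel pairwise, and the entire star-dependent remainder assembles into $-BC(A_2y^\star+A_3z^\star+Ay^\star z^\star)$, which by the first equation of \eqref{2661} equals $aBC$; what is left is exactly $AB_1C_1+aBC-BA_2C_1-CA_3B_1=0$, i.e. \eqref{272}. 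Conversely, when \eqref{272} holds the explicit choice $\alpha=b_1c_1$, $\beta=-a_2c_1$, $\gamma=-a_3b_1$ --- which is precisely the integral written out in the statement --- satisfies the first two equations identically, the third by \eqref{268}, and the fourth by \eqref{272}, and (being polynomial) it remains valid without any nondegeneracy assumption on $b_1,c_1$. The main obstacle is thus the reduction of the cubic ($\xi\eta\zeta$) coefficient condition to \eqref{272}; the rest is the bookkeeping of matching monomials.
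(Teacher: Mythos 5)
Your proof is correct and takes essentially the same route as the paper: substituting \eqref{271} into $dV/dt=0$ produces exactly the same four homogeneous linear equations on $(\alpha,\beta,\gamma)$, with the third equation holding automatically by \eqref{268} and the fourth reducing to \eqref{272}, and your explicit choice $\alpha=(B_1+Bz^{\star})(C_1+Cy^{\star})$, $\beta=-(A_2+Az^{\star})(C_1+Cy^{\star})$, $\gamma=-(A_3+Ay^{\star})(B_1+Bz^{\star})$ is precisely the integral stated in the proposition. The only difference is that you carry out in full the one computation the paper leaves implicit, namely collapsing $A(B_1+Bz^{\star})(C_1+Cy^{\star})-B(A_2+Az^{\star})(C_1+Cy^{\star})-C(A_3+Ay^{\star})(B_1+Bz^{\star})=0$ to the star-free relation \eqref{272} via the first equation of \eqref{2661}.
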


 \begin{proof}
  Substituting \eqref{271} in the equation $dV/dt=0$
 that \eqref{271} is an integral of motion
 \eqref{266} if and only if
 \[
 \left\{
  \begin{array}{lr}
  \al (A_2+Az)+\beta(B_1+Bz)=0 \\
   \al (A_3+Ay)+ \gamma (C_1+Cy) =0 \\
   \beta (B_3+Bx)+\gamma (C_2+Cy)=0 \\
 \al A +\beta B +\gamma C=0.
 \end{array}
 \right.
  \]
 Expressing $\beta, \gamma$ in terms of $\alpha$ from the first two
 equations one observes that the third equation is then automatically
 satisfied due to \eqref{268}. Solving the last equation leads to
 \eqref{272}.
 \end{proof}

Having a first integral $V$ as above, allows one to conclude that
 the equilibrium $(x^{\star}, y^{\star}, z^{\star})$ is neutrally stable
 in the Lyapunov sense.
 
 \section{Nonlinear Markov games}

Nonlinear Markov
games arise as a (competitive) controlled version of
nonlinear Markov processes (an emerging field of intensive research,
see e.g. \cite{Ko10}, \cite{Frank} and references therein). This class of
games can model a variety of situation for economics and epidemics,
statistical physics, and pursuit - evasion processes.

A discrete-time, discrete-space {\it nonlinear Markov
semigroup} $\Phi^k$, $k\in\N$, is
specified by an arbitrary continuous mapping $\Phi: \Si_n \to
\Si_n$, where the simplex
\[
\Si_n=\{\mu=(\mu_1,...,\mu_n)\in \R^n_+: \, \sum_{i=1}^n\mu_i=1\}
\]
represents the set of probability laws on the finite state space
$\{1,...,n\}$. For a measure $\mu \in \Si_n$ the family
$\mu^k=\Phi^k\mu$ can be considered an evolution of measures on
$\{1,...,n\}$. But it does not yet define a random process, because
finite-dimensional distributions are not specified. In order to
obtain a process we have to choose a {\it stochastic representation}
for $\Phi$, i.e. to write it down in the form

\begin{equation}
\label{eqstochrepfordiscreteMchains}
\Phi(\mu)=\{\Phi_j(\mu)\}_{j=1}^n=\{\sum_{i=1}^nP_{ij}(\mu)\mu_i\}_{j=1}^n,
\end{equation}
where $P_{ij}(\mu)$ is a family of stochastic
matrices
depending on $\mu$ (nonlinearity!), whose elements specify the {\it
nonlinear transition probabilities}.
 For any given $\Phi: \Si_n
\mapsto \Si_n$ a representation \eqref{eqstochrepfordiscreteMchains}
exists but is not unique. There exists a unique
representation \eqref{eqstochrepfordiscreteMchains} with the
additional condition that all matrices $P_{ij}(\mu)$ are one dimensional:
\begin{equation}
\label{eqstochrepfordiscreteMchainssimple}
P_{ij}(\mu)=\Phi_j(\mu), \quad i,j=1,...,n.
\end{equation}
Once a stochastic representation
\eqref{eqstochrepfordiscreteMchains} for a mapping $\Phi$ is chosen
we can naturally define, for any initial probability law
$\mu=\mu^0$, a stochastic process $i_l$, $l\in \Z_+$, called a {\it
nonlinear Markov chain}\index{nonlinear Markov chain}, on
$\{1,...,n\}$ in the following way. Starting with an initial
position $i_0$ distributed according to $\mu$ we then choose the
next point $i_1$ according to the law $\{P_{i_0j}(\mu)\}_{j=1}^n$,
the distribution of $i_1$ becoming $\mu^1=\Phi(\mu)$:
\[
\mu^1_j=\P(i_1=j)=\sum_{i=1}^n P_{ij} (\mu) \mu_i =\Phi_j(\mu).
\]
Then we choose $i_2$ according to the law
$\{P_{i_1j}(\mu^1)\}_{j=1}^n$, and so on. The law of this process at
any given time $k$ is $\mu^k=\Phi^k(\mu)$; that is, it is given by
the semigroup. However, now the finite-dimensional distributions are
defined as well. Namely, say for a function $f$ of two discrete
variables, we have
\[
\E f(i_k,i_{k+1})=\sum_{i,j=1}^n f(i,j) \mu^k_iP_{ij}(\mu^k).
\]
In other words, this process can be defined as a time nonhomogeneous
Markov chain with transition probabilities $P_{ij}(\mu^k)$ at time $t=k$.

We turn now to nonlinear chains in continuous time. A {\it nonlinear
Markov semigroup}\index{nonlinear Markov semigroup} in continuous
time and with finite state space $\{1,...,n\}$ is defined as a
semigroup $\Phi^t$, $t\ge 0$, of continuous transformations of
$\Si_n$. As in the case of discrete time the semigroup itself does
not specify a process. To get a process, assume the semigroup $\Phi^t$ is differentiable in $t$, so that we
can define the {\it (nonlinear) infinitesimal
generator} of the
semigroup $\Phi^t$ as the nonlinear operator on measures given by
\[
 A(\mu)=\frac{d}{dt}\Phi^t|_{t=0}(\mu).
 \]
 The semigroup identity for $\Phi^t$ implies that
 $\Phi^t(\mu)$ solves the Cauchy problem
\begin{equation}
\label{eqevoleqfornonlMarkovch}
 \frac{d}{dt} \Phi^t
(\mu)=A(\Phi^t(\mu)), \quad \Phi^0(\mu)=\mu.
\end{equation}

As follows from the invariance of $\Si_n$ under these dynamics, the
mapping $A$ is {\it conditionally positive}\index{conditional
positivity} in the sense that $\mu_i =0$ for a $\mu \in \Si_n$
implies $A_i(\mu) \ge 0$ and is also {\it conservative}\index{conservativity} in the sense
that $A$ maps the measures from $\Si_n$ to the space of signed
measures
\[
\Si^0_n=\{\nu \in \R^n: \sum_{i=1}^n \nu_i=0\}.
\]
We shall say that such a generator $A$ has a {\it stochastic representation}
 if it
can be written in the form
\begin{equation}
\label{eqstochrepforgenMch}
 A_j(\mu)=\sum_{i=1}^n\mu_iQ_{ij}(\mu)=(\mu Q(\mu))_j,
\end{equation}
where $Q(\mu)=\{Q_{ij}(\mu)\}$ is a family of infinitesimally
stochastic matrices (or $Q$=matrices) \index{stochastic matrix!infinitesimally}
depending on $\mu \in \Si_n$. Thus
in its stochastic representation the generator has the form of a
usual Markov chain generator, though depending additionally on the
present distribution. The existence of a stochastic representation
for the generator is not
difficult to obtain, see \cite{Ko10}.

The examples of nonlinear Markov chains are numerous including Lotka-Volterra systems, general replicator
dynamics of the evolutionary game theory, models of epidemics, coagulation processes, see more in \cite{Ko10}.

 Now we discuss the corresponding nonlinear
extension of controlled processes.

Nonlinear Markov games can be considered as a systematic tool for
modeling  deception. In particular, in a game of pursuit - evasion,
an evading object can create false objectives or hide in order to
deceive the pursuit. Thus, observing this object leads not to its
precise location, but to its distribution only, implying that it is
necessary to build competitive control on the basis of the
distribution of the present state. Moreover, by observing the action
of the evading objects, one can make conclusions about its certain
dynamic characteristics making the (predicted) transition
probabilities depending on the observed distribution via these
characteristics. This is precisely the type of situations modeled by
nonlinear Markov games.

The starting point for the analysis is the observation that a
nonlinear Markov semigroup is after all just a deterministic dynamic
system (though on a weird state space of measures and with a specifically structured payoff function).
Thus, as the stochastic control theory is a natural extension of the
deterministic control, we are going to further extend it by turning
back to deterministic control, but of measures, thus exemplifying
the usual spiral development of science. The next 'turn of the
screw' would lead to stochastic measure-valued games forming a
stochastic control counterpart for the class of processes discussed
in the previous section.

 We shall work directly in the
competitive control setting (game theory), which of course includes
the usual optimization as a particular case, but for simplicity only
in discrete time and finite original state space $\{1,...,n\}$. The
full state space is then chosen as a set of probability measures
$\Si_n$ on $\{1,...,n\}$.

 Suppose we are given two metric spaces $U$, $V$ of the control parameters of two
players, a continuous transition cost function $g(u,v,\mu)$, $u\in
U$, $v\in V$, $\mu \in \Si_n$ and a transition law $\nu (u,v,\mu)$
prescribing the new state $\nu\in \Si_n$ obtained from $\mu$ once
the players had chosen their strategies $u\in U, v\in V$. The
problem of the corresponding one-step game (with sequential moves)
consists in calculating the Bellman operator
\begin{equation}
 \label{eqnonlineargameBeloper}
(BS)(\mu)= \min_u\max_v [g(u,v,\mu)+S(\nu(u,v,\mu))]
\end{equation}
for a given final cost function $S$ on $\Si_n$. According to the
dynamic programming principle (see e.g. \cite{KoMabook}), the dynamic multi-step game
solution is given by the iterations $B^kS$. Often of interest is the
behavior of this optimal cost $B^kS(\mu)$ as the number of steps $k$
go to infinity.

The function $\nu (u,v,\mu)$ can be interpreted as the controlled
version of the mapping $\Phi$ specifying a nonlinear discrete time
Markov semigroup.
Assume a stochastic representation for this mapping is chosen, i.e.
\[
\nu_j (u,v,\mu)=\sum_{i=1}^n\mu_iP_{ij}(u,v,\mu)
\]
with a given family of (controlled) stochastic matrices $P_{ij}$.
Then it is natural to assume $g$ to describe the average over the
random transitions, i.e. be given by
\[
g(u,v,\mu)=\sum_{i,j=1}^n\mu_iP_{ij}(u,v,\mu)g_{ij}
\]
with certain real coefficients $g_{ij}$. Under this assumption the
Bellman operator \eqref{eqnonlineargameBeloper} takes the form
\begin{equation}
 \label{eqnonlineargameBeloper1}
(BS)(\mu)= \min_u\max_v
[\sum_{i,j=1}^n\mu_iP_{ij}(u,v,\mu)g_{ij}+S\left(\sum_{i=1}^n\mu_iP_{i.}(u,v,\mu)\right)].
\end{equation}

We can now identify the (not so obvious) place of the usual
stochastic control theory in this nonlinear setting. Namely, assume
$P_{ij}$ above do not depend on $\mu$. But even then the set of the
linear functions $S(\mu)=\sum_{i=1}^ns_i\mu^i$ on measures
(identified with the set of vectors $S=(s_1,...,s_n)$) is not
invariant under $B$. Hence we are not automatically reduced to the
usual stochastic control setting, but to a game with incomplete
information, where the states are probability laws on $\{1,...,n\}$,
i.e. when choosing a move the players do not know the position
precisely, but only its distribution. Only if we allow only Dirac
measures $\mu$ as a state space (i.e. no uncertainty on the state),
the Bellman operator would be reduced to the usual one of the
stochastic game theory:
\begin{equation}
 \label{eqnonlineargameBeloperredlin}
(\bar B S)_i= \min_u\max_v \sum_{j=1}^nP_{ij}(u,v)(g_{ij}+S_j).
\end{equation}

As an example of a nonlinear result we shall get here an analog of
the result on the existence of the average income  for long lasting
games (see \cite{Ko10} for a proof).

\begin{prop}
\label{propnonlgameeigenvalue}
 If the mapping $\nu$ is a contraction uniformly in $u,v$, i.e. if
\begin{equation}
 \label{eqtransitioncontracts}
\|\nu (u,v,\mu^1)-\nu (u,v,\mu^2)\| \le \de \|\mu^1-\mu^2\|
\end{equation}
with a $\de \in (0,1)$, where $\|\nu\|=\sum_{i=1}^n|\nu_i|$, and if
$g$ is Lipschitz continuous, i.e.
\begin{equation}
 \label{eqnonlcostLip}
\|g (u,v,\mu^1)-g (u,v,\mu^2)\| \le C \|\mu^1-\mu^2\|
\end{equation}
with a constant $C>0$,
 then there exists a unique
$\la\in\R$ and a Lipschitz continuous function $S$ on $\Si_n$ such
that
\begin{equation}
\label{eqnonleigenvector}
 B(S)=\la+S,
\end{equation}
and for all $g\in C(\Si_n)$ we have
\begin{align}
\label{eqnonlgamelongtimebeh}
&\|B^mg-m\la\|\le\|S\|+\|S-g\|,\\
&\lim_{m\to\infty}\frac{B^mg}{m}=\la.
\end{align}
\end{prop}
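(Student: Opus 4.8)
The plan is to exploit two structural features of the Bellman operator $B$ of \eqref{eqnonlineargameBeloper}: that it is order-preserving and commutes with the addition of constants, and that the contraction hypothesis \eqref{eqtransitioncontracts} forces $B$ to improve the Lipschitz regularity of its argument. First I would record that $B$ is \emph{nonexpansive} in the sup-norm on $C(\Si_n)$. Indeed, if $S_1\le S_2$ pointwise then $g(u,v,\mu)+S_1(\nu(u,v,\mu))\le g(u,v,\mu)+S_2(\nu(u,v,\mu))$ for every $u,v,\mu$, and since the $\min_u\max_v$ operation preserves this order, $BS_1\le BS_2$; moreover $B(S+c)=BS+c$ for any constant $c$, directly from the definition. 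Applying these two facts to the sandwich $S_2-\|S_1-S_2\|\le S_1\le S_2+\|S_1-S_2\|$ yields $\|BS_1-BS_2\|\le\|S_1-S_2\|$.

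The heart of the argument, and the step I expect to be the main obstacle, is the existence of the eigenpair $(\la,S)$. Here I would use that $B$ contracts the Lipschitz seminorm. If $S$ is $L$-Lipschitz, then for any $\mu^1,\mu^2$ the difference $(BS)(\mu^1)-(BS)(\mu^2)$ is bounded by $\sup_{u,v}\bigl(|g(u,v,\mu^1)-g(u,v,\mu^2)|+|S(\nu(u,v,\mu^1))-S(\nu(u,v,\mu^2))|\bigr)$, because a difference of $\min_u\max_v$ expressions is dominated by the sup over $u,v$ of the differences of the integrands. Invoking \eqref{eqnonlcostLip} and then \eqref{eqtransitioncontracts} bounds this by $(C+\de L)\|\mu^1-\mu^2\|$, so $BS$ is $(C+\de L)$-Lipschitz. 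Since $\de\in(0,1)$, the affine map $L\mapsto C+\de L$ has the attracting fixed point $L^{\star}=C/(1-\de)$, and the set $K=\{S\in C(\Si_n): S \text{ is } L^{\star}\text{-Lipschitz}, \ S(\mu_0)=0\}$, for a fixed reference point $\mu_0\in\Si_n$, is mapped into itself by the normalized operator $T(S)=BS-(BS)(\mu_0)$. As $\Si_n$ is compact, $K$ is a compact convex subset of $C(\Si_n)$ by the Arzel\`a--Ascoli theorem, and $T$ is continuous by nonexpansiveness; the Schauder fixed point theorem then furnishes $S\in K$ with $T(S)=S$, that is $BS=S+\la$ with $\la=(BS)(\mu_0)$. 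The delicate points to watch are that $B$ is only nonexpansive (never a strict sup-norm contraction, because of the $\min\max$), so a Banach-type argument is unavailable and one genuinely needs the compactness supplied by the Lipschitz smoothing, and that $T$ preserves both the normalization and the sharp Lipschitz bound $C+\de L^{\star}=L^{\star}$.

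Uniqueness of $\la$ follows formally from nonexpansiveness. If $BS_i=S_i+\la_i$ for $i=1,2$, then iterating gives $B^mS_i=S_i+m\la_i$, whence $\|S_1-S_2+m(\la_1-\la_2)\|=\|B^mS_1-B^mS_2\|\le\|S_1-S_2\|$ for every $m$; since the left side grows like $m|\la_1-\la_2|$ unless $\la_1=\la_2$, one concludes $\la_1=\la_2$.

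Finally, the long-time behaviour is immediate once the eigenpair is in hand. For an arbitrary $g\in C(\Si_n)$ (now read as an initial cost to be iterated) write $B^mg-m\la=(B^mg-B^mS)+(B^mS-m\la)$. The first bracket has sup-norm at most $\|g-S\|$ by nonexpansiveness, while the second equals $S$ because $B^mS=S+m\la$; hence $\|B^mg-m\la\|\le\|S-g\|+\|S\|$, which is the asserted bound, and dividing by $m$ and letting $m\to\infty$ gives $\lim_m B^mg/m=\la$.
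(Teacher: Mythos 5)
The paper offers no proof of this proposition: it is stated as imported from \cite{Ko10} (``see \cite{Ko10} for a proof''), so there is no internal argument to compare yours against. Judged on its own, your proof is correct and complete, and it is essentially the standard ergodic-Bellman argument (and, in substance, the one in the cited source). All the key steps check out: monotonicity of $B$ together with $B(S+c)=BS+c$ gives sup-norm nonexpansiveness; the elementary bound $|\inf_u\sup_v F-\inf_u\sup_v G|\le\sup_{u,v}|F-G|$ combined with \eqref{eqtransitioncontracts} and \eqref{eqnonlcostLip} shows that $B$ maps $L$-Lipschitz functions to $(C+\de L)$-Lipschitz functions; hence the normalized operator $T(S)=BS-(BS)(\mu_0)$ preserves the set $K$ of $C/(1-\de)$-Lipschitz functions vanishing at $\mu_0$, which is nonempty, convex and compact by Arzel\`a--Ascoli, so Schauder's theorem yields the eigenpair $(\la,S)$; uniqueness of $\la$ follows from the linear-growth contradiction against nonexpansiveness; and both assertions in \eqref{eqnonlgamelongtimebeh} follow from the decomposition $B^mg-m\la=(B^mg-B^mS)+S$. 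You also correctly identified the genuine obstruction (no strict sup-norm contraction, so Banach is unavailable and the Lipschitz smoothing must supply compactness). Two minor points to flag: the $\min_u\max_v$ in \eqref{eqnonlineargameBeloper} need not be attained since the paper only assumes $U$, $V$ are metric spaces, so one should read them as $\inf_u\sup_v$, under which every inequality you use survives unchanged; and, as your argument implicitly shows, only $\la$ is unique --- $S$ is not (any constant shift of $S$ is again an eigenfunction), which is the correct reading of the statement.
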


One can extend the other results for stochastic multi-step games to
this nonlinear setting, say, the turnpike theorems from
 \cite{KoMabook}, and then go on studying the
nonlinear Markov analogs of differential games.

\section{Colored options as a game against Nature}
\label{Options}

Here we introduce a game-theoretic analysis of rainbow options in incomplete markets.
Further developments can be found in \cite{MaMu}, \cite{BerAuKo} and \cite{Ko11}.

Recall that a European option is a contract between
two parties where one party has right to complete a transaction in
the future (with previously agreed amount, date and price) if he/
she chooses, but is not obliged to do so.
More precisely, consider a financial market dealing with several securities:
the risk-free bonds (or bank account) and $J$ common stocks,
$J=1,2...$. In case $J>1$, the corresponding options are called {\it
colored or rainbow options}
($J$-colors option for a given $J$). Suppose the prices of the units of these
securities, $B_m$ and $S_m^{i}$, $i\in \{1,2,...,J\}$, change in
discrete moments of time $m=1,2,...$
according to the recurrent equations $B_{m+1}=\rho B_m$, where the
$\rho \geq 1$ is an interest rate which remains unchanged over time,
and $S_{m+1}^{i}=\xi_{m+1}^{i}S_m^i$, where $\xi_m^{i},i\in
\{1,2,...,J\}$, are unknown sequences taking values in some fixed
intervals $M_{i}=[d_{i},u_{i}]\subset \R$. This model
generalizes the colored version of the classical CRR model in a
natural way. In the latter a sequence $\xi_m^{i}$ is confined to
take values only among two boundary points $d_{i},u_{i}$, and it is
supposed to be random with some given distribution. In our model any
value in the interval $[d_{i},u_{i}]$ is allowed and no probabilistic assumptions are made.

The type of an option is specified by a given premium function $f$
of $J$ variables. The following are the standard examples:

option delivering the best of $J$ risky assets and cash
\begin{equation}
f(S^{1},S^{2},...,S^{J})=\max (S^{1},S^{2},...,S^{J},K),
 \label{best risky assets}
\end{equation}

calls on the maximum of $J$ risky assets
\begin{equation}
f(S^{1},S^{2},...,S^{J})=\max (0,\max (S^{1},S^{2},...,S^{J})-K),
\label{calls on max}
\end{equation}

multiple-strike\index{option!multiple-strike} options
\begin{equation}
f(S^{1},S^{2},...,S^{J})=\max
(0,S^{1}-K_{1},S^{2}-K_{2},....,S^{J}-K_{J}),
\label{multiple strike options}
\end{equation}

portfolio\index{option!portfolio} options
\begin{equation}
f(S^{1},S^{2},...,S^{J})=\max
(0,n_{1}S^{1}+n_{2}S^{2}+...+n_{J}S^{J}-K),
\end{equation}

and spread\index{option!spread} options
\begin{equation}
f(S^{1},S^{2})=\max (0,(S^{2}-S^{1})-K).
\end{equation}

Here, the $S^{1},S^{2},...,S^{J}$ represent the (in principle
unknown at the start) expiration date values of the underlying
assets, and $K,K_{1},...,K_{J}$ represent the (agreed from the
beginning) strike prices. The presence of $\max$ in all these
formulae reflects the basic assumption that the buyer is not obliged
to exercise his/her right and would do it only in case of a positive
gain.

The investor is supposed to control the growth of his/her capital in the
following way. Let $X_m$ denote the capital of the investor at the
time $m=1,2,...$. At each time $m-1$ the investor determines his
portfolio by choosing the numbers $\ga _m^i$ of common stocks
of each kind to be held so that the structure of the capital is
represented by the formula

\[
X_{m-1} =\sum_{j=1}^{J}\ga_m^j S_{m-1}^j
 +(X_{m-1}-\sum_{j=1}^{J}\ga_m^j S_{m-1}^j),
\]
where the expression in brackets corresponds to the part of his
capital held in the bank account. The control parameters $\ga_m^j$
 can take all real values, i.e. short selling and borrowing are
allowed. The value $\xi_m$ becomes known at the moment $m$ and thus
the capital at the moment $m$ becomes

\begin{equation}
\label{eqnewcap1}
X_m=\sum_{j=1}^{J}\ga _m^j\xi_m^j S_{m-1}^j +\rho
(X_{m-1}-\sum_{j=1}^J \ga_m^j S_{m-1}^j),
\end{equation}
if transaction costs are not taken into account.

If $n$ is the prescribed {\it maturity date}\index{option!maturity
date}, then this procedures repeats $n$ times starting from some
initial capital $X=X_0$ (selling price of an option) and at the end
the investor is obliged to pay the premium $f$ to the buyer. Thus
the (final) income of the investor equals
\begin{equation}
\label{eqnewcap2}
G(X_n,S_n^1,S_n^2,...,S_n^J)=X_n-f(S_n^1,S_n^2,...,S_n^J).
\end{equation}

The evolution of the capital can thus be described by the $n$-step
game of the investor with Nature, the behavior of the latter
being characterized by unknown parameters $\xi_m^j$. The strategy of
the investor is by definition any sequences of vectors
$(\ga_1,\cdots,\ga_n)$
 such that each
$\ga_m$ could be chosen using the whole previous
information: the sequences $X_{0},...,X_{m-1}$ and $S_0^i,...,S_{m-1}^j$
(for every stock $j=1,2,...,J$).
 The control parameters $\ga_m^j$ can take all real values,
i.e. short selling and borrowing are allowed. A position of the game
at any time $m$ is characterized by $J+1$ non-negative numbers
$X_m,S_m^1, \cdots ,S_m^J$ with the final income specified by the
function
\begin{equation}
G(X,S^{1},...,S^{J})=X-f(S^{1},...,S^{J}).
\label{eqG function}
\end{equation}

The main definition of the theory is as follows. A strategy $\ga_1,\cdots,\ga_n$, of the investor is called a
{\it hedge}\index{hedge}, if for any sequence
  $(\xi_1, \cdots ,\xi_n)$
the investor is able to meet his/her obligations, i.e.
\[
G(X_{n},S_{n}^{1},...,S_{n}^{J})\geq 0.
\]
The minimal value of the capital $X_{0}$ for which the hedge exists
is called the {\it hedging price} $H$ of an option.

Looking for the guaranteed payoffs means looking
for the worst-case scenario (so called {\it robust-control
approach}\index{robust control}), i.e. for the minimax strategies. Thus if the
final income is specified by a function $G$, the guaranteed income
of the investor in a one-step game with the initial conditions
$X,S^{1},...,S^{J}$ is given by the {\it Bellman
operator}\index{Bellman operator}
\begin{equation}
\label{eqBellmanforopnonred}
\mathbf{B}G(X, S^1,\cdots, S^J)
 =\frac{1}{\rho}\max_{\ga}\min_{\{\xi^j \in [d_j,u_j]\}}
 G(\rho X+ \sum_{i=1}^{J}\ga^i\xi^iS^{i}
  -\rho \sum_{i=1}^{J}\ga^iS^i,\xi^1S^1, \cdots,\xi^JS^J),
\end{equation}
and (as it follows from the standard backward induction argument) the guaranteed income of
the investor in the $n$-step game with the initial conditions
$X_{0},S_{0}^{1},...,S_{0}^{J}$ is given by the formula

\[
\mathbf{B}^{n}G(X_{0},S_{0}^{1},...,S_{0}^{J}).
\]

In our model $G$ is given by \eqref{eqG function}. Clearly for $G$ of the form
\[
G (X,S^1,\cdots, S^J)=X-f(S^1,\cdots,S^J),
\]
\[
\mathbf{B}G(X,S^{1},...,S^{J})
 =X - \frac{1}{\rho}\min_{\ga}\max_{\xi}
 [f(\xi ^1 S^1,\xi^2 S^2, \cdots,\xi^J S^J)
-\sum_{j=1}^J\ga^j S^j(\xi^j-\rho)],
\]
and hence
\[
\mathbf{B}^nG(X,S^1,\cdots,S^J)
= X -\frac{1}{\rho^n}(\BC^{n}f)(S^1, \cdots ,S^J),
\]
where the {\it reduced Bellman operator} is defined as:
\begin{equation}
\label{eqBellmanforop}
(\BC f)(z^1,...,z^J)=\min_{\ga}\max_{\{\xi^j \in [d_j,u_j]\}}
[f(\xi ^1 z^1,\xi^2 z^2, \cdots,\xi^J z^J)
-\sum_{j=1}^J\ga^j z^j(\xi^j-\rho)],
\end{equation}
or, more concisely,
\begin{equation}
\label{eqBellmanforop2}
(\BC f)(z)=\min_{\ga}\max_{\{\xi^j \in [d_j,u_j]\}}
[f(\xi \circ z)-(\ga, \xi \circ z-\rho z)].
\end{equation}

This leads to the following result from \cite{Ko98}.

\begin{theorem}
\label{thoptionprice}
 The minimal value of $X_{0}$ for which the
income of the investor is non-negative (and which by definition is
the hedge price $H^n$ in the $n$-step game) is given by
\begin{equation}
H^{n}=\frac{1}{\rho^{n}}(\BC^{n}f)(S_0^1, \cdots , S_0^J).
 \label{hedgeprice}
\end{equation}
\end{theorem}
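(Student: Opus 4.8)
The plan is to reduce the determination of the hedge price $H^n$ to the evaluation of the value of the underlying $n$-step minimax (robust-control) game, and then to read off the threshold on $X_0$ from the explicit form of $\mathbf{B}^nG$ derived above. By definition an initial capital $X_0$ admits a hedge exactly when the investor has a strategy $(\ga_1,\dots,\ga_n)$ forcing $G(X_n,S_n^1,\dots,S_n^J)\ge 0$ against every admissible price path $(\xi_1,\dots,\xi_n)$. Writing
\[
W_n(X_0,S_0^1,\dots,S_0^J)=\sup_{(\ga_1,\dots,\ga_n)}\ \inf_{(\xi_1,\dots,\xi_n)} G(X_n,S_n^1,\dots,S_n^J)
\]
for the (upper) value of this game, a hedge therefore exists if and only if $W_n\ge 0$, and $H^n$ is the least $X_0$ with this property. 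So it suffices to compute $W_n$ and isolate its dependence on $X_0$.

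The heart of the matter is the dynamic programming (backward induction) principle. With $k$ periods left and terminal data $W_0=G$, I would show $W_k(X,S)=\max_\ga\min_\xi W_{k-1}(X_1,S_1)$, where $(X_1,S_1)$ denotes the one-step update of $(X,S)$ under $(\ga,\xi)$ given by \eqref{eqnewcap1}. The inequality $\ge$ is obtained by exhibiting a strategy — play the maximizing $\ga$ in the first period and an optimal continuation thereafter — while $\le$ follows by letting Nature answer with the minimizing $\xi$. Since the one-step operator in \eqref{eqBellmanforopnonred} is exactly $\rho^{-1}$ times this undiscounted update, and the undiscounted map is positively homogeneous, iterating yields the clean identity $\mathbf{B}^k G=\rho^{-k}W_k$; in particular $W_k$ and $\mathbf{B}^kG$ always share the same sign. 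This backward induction is the main obstacle: one must verify that position-based (Markov) strategies are optimal, that the first-period optimization genuinely decouples from the continuation value, and that the extrema are attained — the last point using compactness of each interval $M_j=[d_j,u_j]$ together with a no-arbitrage/coercivity hypothesis (such as $d_j<\rho<u_j$) ensuring the inner minimization over $\ga$ is finite and achieved.

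Given the recursion, the passage from $\mathbf{B}$ to $\BC$ is the purely algebraic reduction already recorded above. The terminal payoff $G(X,S)=X-f(S)$ is affine in $X$ with unit slope, and this form is preserved by $\mathbf{B}$: because the capital update contributes a term $\rho X$ independent of both $\ga$ and $\xi$, it factors out of the optimization, leaving $\mathbf{B}(X-g(S))=X-\tfrac1\rho(\BC g)(S)$ for every $g$. Coupling this with the positive homogeneity $\BC(\la g)=\la\,\BC g$ for $\la>0$ — immediate from the substitution $\ga\mapsto\la\ga$ in \eqref{eqBellmanforop2} — an induction on $n$ produces
\[
\mathbf{B}^n G(X_0,S_0^1,\dots,S_0^J)=X_0-\frac{1}{\rho^n}(\BC^n f)(S_0^1,\dots,S_0^J),
\]
so that $\mathbf{B}^nG$, and hence $W_n$, depends on $X_0$ only through the single additive term $X_0$.

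To conclude: since $\rho^n>0$, the condition $W_n(X_0,S_0)\ge 0$ is equivalent to $\mathbf{B}^nG(X_0,S_0)\ge 0$, i.e. to $X_0\ge \rho^{-n}(\BC^n f)(S_0^1,\dots,S_0^J)$. Thus a hedge exists precisely for these $X_0$, and the minimal such value — the hedge price — is $H^n=\rho^{-n}(\BC^n f)(S_0^1,\dots,S_0^J)$, as asserted. All the genuine work sits in the backward-induction step (measurable selection of optimal $\ga$ and $\xi$ and attainment of the extrema); once the dynamic programming recursion is secured, the formula for $H^n$ drops out.
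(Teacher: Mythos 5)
Your proposal is correct and follows essentially the same route as the paper, which likewise identifies the guaranteed income with the backward-induction iterates $\mathbf{B}^nG$, uses the affine-in-$X$ structure $\mathbf{B}^nG = X - \rho^{-n}(\BC^n f)(S)$ to decouple the capital from the reduced operator, and reads off the threshold on $X_0$ (the paper defers the rigorous dynamic-programming details to \cite{Ko98}, exactly the step you flesh out). Your additional care about the discount factor (sign equivalence of $W_n$ and $\mathbf{B}^nG$) and about attainment of the extrema under $d_j<\rho<u_j$ is consistent with, and slightly more explicit than, the paper's treatment.
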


We shall now develop a method for evaluating the operator \eqref{eqBellmanforop} showing in particular how naturally the risk-neutral probability laws appear, as if by miracle, in this evaluation.  The proof of all results below (and its various extensions) can be found in \cite{Ko11}.

Let us say that a probability law $\mu \in \PC(E)$ on a compact set $E\subset \R^d$ is {\it risk-neutral with respect to the origin}, or shortly, {\it risk-neutral}\index{risk-neutral probability}
if the origin is its barycenter, that is $\int_E \xi \mu (d\xi)=0$.
The set of all risk-neutral laws on $E$ will be denoted by $\PC_{rn}(E)$.

 Let us say that a finite family of vectors $E=\{ \xi_1,\cdots,\xi_k\}$ in $\R^d$ is in
{\it general position}\index{general position} if the vectors of any subset of $\{ \xi_1,\cdots,\xi_k\}$ of size $d$ are linearly independent (in particular, all vectors in $E$ are non-vanishing).
A subset $E\subset \R^d$ is called (strongly)
{\it positively complete}, if
 there exists no $\om \in \R^d$ such that $(\om, \xi)\ge 0$ for all $\xi \in E$.

  \begin{prop}
\label{propbasriskneut2}
Let a finite set $E=\{ \xi_1,\cdots,\xi_{d+1}\}$ be strongly positively complete in $\R^d$. Then
the family $E$ is in general position and
there exists a unique risk-neutral probability law $\{p_1,\cdots , p_{d+1}\}$ on $\{ \xi_1,\cdots,\xi_{d+1}\}$,
with
 \begin{equation}
\label{eq1propbasriskneut2}
 p_i= C^{-1} (-1)^{i-1} \det \left(
 \begin{aligned}
& \xi_1^1 \quad \cdots \quad  \xi_{i-1}^1 \quad  \xi_{i+1}^1 \quad \cdots \quad \xi^1_{d+1}  \\
& \xi_1^2 \quad \cdots \quad  \xi_{i-1}^2 \quad  \xi_{i+1}^2 \quad \cdots \quad \xi^2_{d+1}  \\
&  \quad \quad \quad  \quad  \cdots \quad   \\
& \xi_1^d \quad \cdots \quad  \xi_{i-1}^d \quad  \xi_{i+1}^d \quad \cdots \quad \xi^d_{d+1}  \\
\end{aligned}
\right), \quad i=1,\cdots , d,
\end{equation}
where
 \begin{equation}
\label{eq2propbasriskneut2}
 C= \det \left(
 \begin{aligned}
& 1 \quad \quad 1 \quad \cdots \quad  1  \\
& \xi_1^1 \quad \xi_2^1 \quad \cdots  \quad \xi^1_{d+1}  \\
&  \quad \quad \quad  \quad  \cdots \quad   \\
& \xi_1^d \quad \xi_2^d \quad \cdots \quad \xi^d_{d+1}  \\
\end{aligned}
\right).
\end{equation}
 \end{prop}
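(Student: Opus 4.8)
The plan is to translate the notion of a risk-neutral law into barycentric coordinates and then read off everything from one nonvanishing determinant. A probability law $\{p_1,\dots,p_{d+1}\}$ on $\{\xi_1,\dots,\xi_{d+1}\}$ is risk-neutral precisely when $\sum_{i=1}^{d+1}p_i\xi_i=0$ together with $\sum_{i=1}^{d+1}p_i=1$ and $p_i\ge 0$. Dropping the sign constraint, these two linear conditions are exactly the square system $\Xi p=e_1$, where $\Xi$ is the $(d+1)\times(d+1)$ matrix whose first row is $(1,\dots,1)$ and whose lower $d$ rows are the coordinates of the $\xi_i$ (so $\det\Xi=C$ of \eqref{eq2propbasriskneut2}), and $e_1=(1,0,\dots,0)^{T}$. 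Thus the whole statement reduces to showing that $C\ne 0$ (giving existence, uniqueness and, via Cramer's rule, the formulas), that the unique solution is strictly positive (giving that it is a genuine probability law), and that the family is in general position. The one nonalgebraic ingredient I will need is the separation/supporting-hyperplane theorem for convex sets; everything else is linear algebra.

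First I would prove $C\ne 0$, i.e. that $\xi_1,\dots,\xi_{d+1}$ are affinely independent, directly from strong positive completeness. If they were affinely dependent, their affine hull would be contained in some affine hyperplane $\{x:(\om,x)=\beta\}$ with $\om\ne 0$, so that $(\om,\xi_i)=\beta$ for every $i$. Replacing $\om$ by $-\om$ if $\beta<0$, one obtains a nonzero vector $\om'$ with $(\om',\xi_i)\ge 0$ for all $i$, contradicting the assumption. Hence $C\ne 0$, the system $\Xi p=e_1$ has a unique solution, and Cramer's rule, after expanding the replaced column (which is $e_1$) along its single nonzero entry in the top row, yields exactly \eqref{eq1propbasriskneut2}--\eqref{eq2propbasriskneut2}; the same computation gives the remaining coordinate $p_{d+1}$ as well. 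This already settles existence and uniqueness of the solution and the explicit formulas.

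Next comes positivity, which I expect to be the crux. I would establish the equivalence that $E$ is strongly positively complete if and only if the origin lies in the interior of $\mathrm{conv}(E)$. Indeed, if $0\notin\mathrm{int}\,\mathrm{conv}(E)$ then either $0\notin\mathrm{conv}(E)$, in which case strict separation of the compact convex set $\mathrm{conv}(E)$ from $0$ produces $\om\ne 0$ with $(\om,\xi_i)>0$ for all $i$, or $0\in\partial\,\mathrm{conv}(E)$, in which case a supporting hyperplane at $0$ produces $\om\ne 0$ with $(\om,\xi_i)\ge 0$ for all $i$; either way $E$ fails to be strongly positively complete. Since $C\ne 0$, the set $\mathrm{conv}(E)$ is a nondegenerate $d$-simplex, and a point lies in the interior of such a simplex exactly when all its barycentric coordinates are strictly positive; applied to the origin this gives $p_i>0$ for all $i$, so the unique solution is a probability law.

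Finally, general position follows from positivity. Fix $j$ and suppose the $d$ vectors $\{\xi_i:i\ne j\}$ were linearly dependent; then they lie in a linear hyperplane through the origin, giving $\om\ne 0$ with $(\om,\xi_i)=0$ for all $i\ne j$. Because the affinely independent family spans $\R^d$ linearly, not all $\xi_i$ can be orthogonal to $\om$, so $(\om,\xi_j)\ne 0$. Taking the inner product of $\sum_i p_i\xi_i=0$ with $\om$ then forces $p_j(\om,\xi_j)=0$, hence $p_j=0$, contradicting $p_j>0$. As the sets $\{\xi_i:i\ne j\}$ range over all $d$-element subsets of $E$, the family is in general position, completing the proof. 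The only genuine obstacle is the separation step establishing positivity; the determinant identities are a routine Cramer computation and the general-position claim is a one-line consequence of the strict positivity of the weights.
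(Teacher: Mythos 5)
The paper does not actually contain a proof of this proposition: the section states that ``the proof of all results below (and its various extensions) can be found in \cite{Ko11}'', so there is no in-paper argument to compare yours against. Judged on its own merits, your proof is correct and complete. The three pillars all check out: (a) affine independence of $\xi_1,\dots,\xi_{d+1}$ (hence $C\neq 0$) follows from strong positive completeness by the sign-flip argument on the affine hyperplane containing the points; (b) Cramer's rule applied to the system $\Xi p=e_1$, with the cofactor expansion of the replaced column along its single nonzero entry, reproduces exactly the signs $(-1)^{i-1}$ and the minors in \eqref{eq1propbasriskneut2}--\eqref{eq2propbasriskneut2} (and you correctly note that the same formula covers $p_{d+1}$, which the statement's range $i=1,\dots,d$ omits, evidently a typo); (c) the separation/supporting-hyperplane dichotomy shows strong positive completeness forces $0\in \Int\,\mathrm{conv}(E)$, and since $\mathrm{conv}(E)$ is a nondegenerate simplex, the barycentric coordinates of an interior point are strictly positive, so the unique solution is a genuine (risk-neutral) probability law.

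One small simplification you might note: your step deducing general position does not need the positivity of the $p_i$ at all. If $\{\xi_i: i\neq j\}$ were linearly dependent, pick $\om\neq 0$ orthogonal to all of them and replace $\om$ by $-\om$ if necessary so that $(\om,\xi_j)\ge 0$; then $(\om,\xi_i)\ge 0$ for every $i$, contradicting strong positive completeness directly. This makes general position a one-line consequence of the hypothesis, logically independent of the existence of the risk-neutral law, whereas your route threads it through the conclusion $p_j>0$; both are valid, but the direct version is cleaner and decouples the two claims of the proposition.
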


For arbitrary $k$ we have the following.

\begin{prop}
\label{propextremriskneutr}
Let a family $E=\{\xi_1, \cdots, \xi_k\}$ be strongly positively complete and
in general position. Then
the extreme points of the convex set of risk-neutral probabilities on $\{\xi_1, \cdots, \xi_k\}$
 are risk-neutral probabilities with supports on strongly positively complete subsets of $E$
 of size precisely $d+1$.
\end{prop}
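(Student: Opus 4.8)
The plan is to realize the set $\PC_{rn}$ of risk-neutral probabilities on $E$ as a bounded polytope in $\R^k$, cut out from the probability simplex $\{p\ge 0,\ \sum_i p_i=1\}$ by the $d$ linear equations $\sum_i p_i\xi_i=0$, and to pin down its extreme points by combining the standard basic-feasible-solution description with the two hypotheses. Throughout, for a candidate extreme point $p$ I write $S=\supp(p)=\{i:p_i>0\}$, and I will show in turn that $|S|\le d+1$, that $|S|\ge d+1$, and that $S$ is strongly positively complete. Note that strong positive completeness of the whole $E$ gives $0\in\Int\,\mathrm{conv}(E)$, so in particular $\PC_{rn}\neq\emptyset$ and $k\ge d+1$.

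First I would establish the upper bound $|S|\le d+1$ from extremality alone. Suppose the vectors $\{(1,\xi_i)\}_{i\in S}\subset\R^{d+1}$ were linearly dependent, say $\sum_{i\in S}c_i(1,\xi_i)=0$ with $(c_i)$ not all zero, and extend $c$ by zero off $S$. Then $\sum_i c_i=0$ and $\sum_i c_i\xi_i=0$, so for small $\de>0$ both $p\pm\de c$ lie in the simplex (using $p_i>0$ on $S$) and still satisfy every equality constraint; hence $p=\tfrac12[(p+\de c)+(p-\de c)]$ contradicts extremality. Therefore $\{(1,\xi_i)\}_{i\in S}$ is independent and $|S|\le d+1$.

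Next I would obtain the matching lower bound $|S|\ge d+1$ from risk-neutrality together with general position. The key observation is that $\sum_{i\in S}p_i\xi_i=0$ with every $p_i>0$ exhibits the origin as a \emph{strict} convex combination of $\{\xi_i\}_{i\in S}$. If $|S|\le d$, general position forces $\{\xi_i\}_{i\in S}$ to be linearly independent (any subset of size $\le d$ sits inside some size-$d$ subset, which is independent by hypothesis, and subsets of independent sets are independent), which is incompatible with a nontrivial vanishing combination. Hence $|S|\ge d+1$, and combined with the previous step $|S|=d+1$ exactly.

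Finally I would check that $S$ is strongly positively complete, and here I expect the only delicate bookkeeping to lie. Suppose not: there is $\om\neq0$ with $(\om,\xi_i)\ge0$ for all $i\in S$. Pairing with the risk-neutral identity gives $0=(\om,\sum_{i\in S}p_i\xi_i)=\sum_{i\in S}p_i(\om,\xi_i)$, a sum of nonnegative terms; since each $p_i>0$, every term vanishes, so $(\om,\xi_i)=0$ for all $i\in S$. Then the $d+1$ vectors $\{\xi_i\}_{i\in S}$ all lie in the hyperplane $\om^{\perp}$, so some $d$ of them are dependent, contradicting general position. Thus $S$ is strongly positively complete of size $d+1$, as claimed. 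The conceptual engine throughout is that positivity of the weights in the risk-neutral identity is used twice — once to force the support to span and once to force strong positive completeness — while the rest is routine polytope theory; a remark could add that the converse holds as well, since by Proposition \ref{propbasriskneut2} each strongly positively complete $(d+1)$-subset carries a unique risk-neutral law, which is then automatically an extreme point of $\PC_{rn}$.
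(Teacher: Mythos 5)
Your proof is correct and complete. There is, however, nothing in the paper to compare it against: the paper states Proposition \ref{propextremriskneutr} without proof, deferring all arguments of this section to \cite{Ko11}, so your argument has to stand on its own — and it does. Each step checks out: strong positive completeness of $E$ gives $0\in \mathrm{conv}(E)$ (indeed in its interior) by the supporting-hyperplane theorem, so $\PC_{rn}\neq\emptyset$ and $k\ge d+1$; the perturbation $p\pm\de c$ is the standard basic-feasible-solution characterization of extreme points of a polytope and correctly yields the linear independence of $\{(1,\xi_i)\}_{i\in S}$, hence $|S|\le d+1$; the identity $\sum_{i\in S}p_i\xi_i=0$ with strictly positive weights is a nontrivial linear dependence, which general position forbids when $|S|\le d$, so $|S|=d+1$ exactly; and pairing a putative $\om\neq 0$ with that identity forces $(\om,\xi_i)=0$ for all $i\in S$, placing $d+1$ vectors inside the hyperplane $\om^{\perp}$ and again contradicting general position, so the support is strongly positively complete. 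Your closing remark on the converse is also justified and worth keeping, since the proposition is most naturally read as a two-way characterization: by Proposition \ref{propbasriskneut2} the risk-neutral law carried by a strongly positively complete $(d+1)$-subset is unique and, by general position and \eqref{eq1propbasriskneut2}, has full support on that subset; any convex decomposition of it within $\PC_{rn}(E)$ must then consist of laws supported on the same subset, which by uniqueness coincide with it, so it is indeed extreme.
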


Now we are going to evaluate the minimax expression
\begin{equation}
\label{eq10mainminmaxoparbd}
\Pi[\xi_1, \cdots , \xi_k](f)=\inf_{\ga \in \R^d} \max_i [f(\xi_i)-(\xi_i,\ga)].
\end{equation}

 \begin{theorem}
\label{thriskneutrald}
Let a family of vectors $\xi_1,\cdots, \xi_k$ in $\R^d$
be strongly positively complete and in general position.
Then
\begin{equation}
\label{eq11mainminmaxoparbd}
\Pi[\xi_1, \cdots , \xi_k](f)=\max_{\mu} \E_{\mu}f(\xi),
\end{equation}
where $\max$ is taken over all extreme points $\mu$ of risk-neutral
laws on $\{ \xi_1,\cdots, \xi_k\}$,
given by Proposition \ref{propextremriskneutr}, and $\inf$ in \eqref{eq10mainminmaxoparbd}
is attained on a certain finite $\ga$.
\end{theorem}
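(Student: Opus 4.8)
The plan is to recognize \eqref{eq10mainminmaxoparbd} as one side of a minimax identity whose other side is the linear optimization defining the right-hand side of \eqref{eq11mainminmaxoparbd}, and then to invoke strong positive completeness twice: once to guarantee that this optimization is feasible, and once to force attainment of the infimum over $\ga$. First I would rewrite the inner maximum over the index $i$ as a maximum over the probability simplex $\Delta_k=\{p\in\R^k_+:\sum_i p_i=1\}$, using that a linear functional on $\Delta_k$ attains its maximum at a vertex:
\[
\Pi[\xi_1,\cdots,\xi_k](f)=\inf_{\ga\in\R^d}\max_{p\in\Delta_k}\Phi(p,\ga),\qquad
\Phi(p,\ga)=\sum_{i=1}^k p_i f(\xi_i)-\Big(\sum_{i=1}^k p_i\xi_i,\ga\Big).
\]
The function $\Phi$ is affine (hence concave and upper semicontinuous) in $p$ and affine (hence convex and lower semicontinuous) in $\ga$, while $\Delta_k$ is compact convex and $\R^d$ is convex. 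Sion's minimax theorem therefore applies and allows the interchange
\[
\Pi[\xi_1,\cdots,\xi_k](f)=\max_{p\in\Delta_k}\ \inf_{\ga\in\R^d}\Phi(p,\ga).
\]

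Next I would evaluate the inner infimum. Since $\Phi(p,\cdot)$ is affine in $\ga$ with linear part $-(\sum_i p_i\xi_i,\cdot)$, the infimum over $\ga\in\R^d$ equals $\sum_i p_i f(\xi_i)$ when the barycenter $\sum_i p_i\xi_i$ vanishes and equals $-\infty$ otherwise. Hence the outer maximum is effectively restricted to those $p$ that are risk-neutral laws on $\{\xi_1,\cdots,\xi_k\}$, giving $\max_{\mu\in\PC_{rn}}\E_\mu f(\xi)$. Strong positive completeness guarantees that $0$ lies in the interior of the convex hull of the $\xi_i$, so $\PC_{rn}$ is a nonempty compact polytope and this maximum is finite and attained. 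Because a linear functional on a compact polytope attains its maximum at an extreme point, and by Proposition \ref{propextremriskneutr} these extreme points are exactly the risk-neutral laws supported on strongly positively complete subsets of size $d+1$, this yields the identity \eqref{eq11mainminmaxoparbd}.

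It remains to upgrade the infimum over $\ga$ to a minimum, which I expect to be the one genuinely geometric step. The map $g(\ga)=\max_i[f(\xi_i)-(\xi_i,\ga)]$ is finite, convex and continuous, so attainment is equivalent to boundedness of its sublevel sets, i.e. to positivity of its recession function $g^\infty(\u)=\max_i(-(\xi_i,\u))$ at every $\u\neq0$. But $g^\infty(\u)>0$ says precisely that some $\xi_i$ satisfies $(\xi_i,\u)<0$, which is exactly what strong positive completeness provides, since it forbids any $\om\neq0$ with $(\om,\xi_i)\ge0$ for all $i$. Thus the sublevel sets of $g$ are compact and the infimum is attained at a finite $\ga$.

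The main obstacle is not any single estimate but the bookkeeping that ties everything together: the two independent appearances of strong positive completeness — nonemptiness of $\PC_{rn}$ (feasibility of the maximization) and coercivity of $g$ (attainment of the minimization) — are precisely what make both the value identity and the attainment statement hold, and one must be careful that the interchange of $\inf$ and $\max$ correctly handles the $-\infty$ values of the inner infimum. An equivalent route, replacing Sion's theorem by strong linear-programming duality between $\max\{\sum_i p_i f(\xi_i):\sum_i p_i\xi_i=0,\ p\in\Delta_k\}$ and $\min_\ga\max_i[f(\xi_i)-(\xi_i,\ga)]$, would proceed through the same two inputs.
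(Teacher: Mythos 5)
The paper itself contains no proof of Theorem \ref{thriskneutrald}: it is stated with the blanket remark that the proofs of all results in that section can be found in \cite{Ko11}. So there is no in-paper argument to compare against, and your proposal has to be judged on its own merits; judged so, it is correct and complete. The three pillars all hold. First, replacing $\max_i$ by a maximum over the simplex $\Delta_k$ is legitimate (a linear functional on $\Delta_k$ is maximized at a vertex), and Sion's minimax theorem does apply to the bilinear $\Phi$, since only one of the two sets need be compact and here the concave variable $p$ ranges over the compact convex $\Delta_k$. Second, the inner infimum over $\ga$ equals $\sum_i p_i f(\xi_i)$ exactly when $\sum_i p_i\xi_i=0$ and is $-\infty$ otherwise; the resulting extended-real, concave, upper semicontinuous function of $p$ attains its supremum on $\Delta_k$ because the risk-neutral set is nonempty, which you correctly extract from strong positive completeness via separation (if $0$ were outside, or even on the boundary of, $\mathrm{conv}\{\xi_i\}$, a separating $\om\neq 0$ would violate the definition); note that mere feasibility only needs $0\in \mathrm{conv}\{\xi_i\}$, so interiority is more than enough. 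Then the maximum of a linear functional over the compact polytope $\PC_{rn}$ sits at an extreme point, and Proposition \ref{propextremriskneutr} (this is the only place where general position is used) converts this into the stated finite maximum. Third, your coercivity argument for attainment of the infimum is exactly right: the sublevel set $\{\ga : (\xi_i,\ga)\ge f(\xi_i)-c \ \forall i\}$ is a polyhedron whose recession cone $\{\u : (\xi_i,\u)\ge 0\ \forall i\}$ is trivial precisely by strong positive completeness, so sublevel sets are compact. The alternative you sketch, finite-dimensional LP strong duality between $\min\{t : t+(\xi_i,\ga)\ge f(\xi_i)\ \forall i\}$ and $\max\{\sum_i p_i f(\xi_i) : p\in\Delta_k,\ \sum_i p_i\xi_i=0\}$, is if anything cleaner, since it delivers the value identity and attainment on both sides in one stroke. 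One cosmetic point: the paper's definition of strong positive completeness must be read as excluding $\om=0$ (otherwise no set satisfies it), as you implicitly do.
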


Changing variables $\xi=(\xi^1,\dots, \xi^J)$ to
$\eta=\xi \circ z$ yields
\begin{equation}
\label{eqBellmanforop1}
(\BC f)(z^1,...,z^J)=\frac{1}{\rho}\min_{\ga}\max_{\{\eta \in [z^id_i,z^i u_i]\}}
[f(\eta)
-\sum_{i=1}^J\ga^i (\eta^i-\rho z^i)],
\end{equation}
or, by shifting,
\begin{equation}
\label{eqBellmanforop21}
(\BC f)(z^1,...,z^J)=\frac{1}{\rho} \min_{\ga}\max_{\{\eta \in [z^i(d_i-\rho),z^i (u_i-\rho)]\}}
[f(\eta + \rho z)-(\ga, \eta)].
\end{equation}
Assuming $f$ is convex (which is often the case for option payoffs),
we can apply Theorem \ref{thriskneutrald}, where $\max$ is taken over the set of vectors
\[
\eta_I= \xi_I \circ z -\rho z,
\]
being the vertices of the rectangular parallelepiped
\[
\Pi_{z,\rho}= \times_{i=1}^J [z^i(d_i-\rho),z^i (u_i-\rho)],
\]
where
\[
\xi_I
= \{ d_i|_{i\in I}, u_j|_{j \notin I}\},
\]
are the vertices of
\begin{equation}
\label{eqparalforop}
\Pi= \times_{i=1}^J [d_i, u_i],
\end{equation}
parametrized by all subsets (including the empty one) $I\subset \{1,\dots,J\}$.

Since the origin is an internal point of $\Pi$ (because $d_i<\rho <u_i$), the family $\{\eta_I\}$
is strongly positively complete. The condition of general position is rough in the sense that
 it is fulfilled for an open dense subset of pairs $(d_i,u_i)$. Applying
 Theorem \ref{thriskneutrald} to \eqref{eqBellmanforop21}
 and returning back to $\xi$ yields the following.

\begin{theorem}
\label{thbasicEuroprainbowBel}
If the vertices $\xi_I$ of the parallelepiped $\Pi$ are in general position in the sense that
for any $J$ subsets $I_1,\cdots, I_J$, the vectors $\{\xi_{I_k}-\rho \1\}_{k=1}^J$
are independent in $\R^J$, then
\begin{equation}
\label{eq1thbasicEuroprainbowBel}
(\BC f)(z)=\frac{1}{\rho} \max_{\{\Om\}} \E_{\Om} f(\xi \circ z), \quad z=(z^1,\cdots, z^J),
\end{equation}
where $\{\Om\}$ is the collection of all subsets $\Om =\xi_{I_1},\cdots,\xi_{I_{J+1}}$
of the set of vertices of $\Pi$, of size $J+1$, such that their convex hull
contains $\rho \1$ as an interior point ($\1$ is the vector with all coordinates 1), and where $\E_{\Om}$ denotes the expectation with respect to the
unique probability law $\{p_I\}$, $\xi_I\in \Om$,
 on the set of vertices of $\Pi$, which is supported on $\Om$
 and is risk-neutral with respect to $\rho \1$, that is
\begin{equation}
\label{eq2thbasicEuroprainbowBel}
\sum_{I\subset \{1,\dots,J\}} p_I\xi_I=\rho \1.
\end{equation}
\end{theorem}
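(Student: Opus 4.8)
The plan is to reduce the operator \eqref{eqBellmanforop21} to the abstract minimax $\Pi[\cdot](\cdot)$ of \eqref{eq10mainminmaxoparbd} and then quote Theorem \ref{thriskneutrald}; the bulk of the genuine work is already packaged in that theorem (and in Propositions \ref{propbasriskneut2} and \ref{propextremriskneutr}), so what remains is a convexity argument followed by careful bookkeeping through the affine change of variables $\eta = \xi \circ z - \rho z$. Throughout I assume the coordinates $z^i > 0$ (true for asset prices), which is what makes this change of variables a genuine diagonal isomorphism.

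First I would replace the continuous inner maximum in \eqref{eqBellmanforop21} by a maximum over the finitely many vertices of the box $\Pi_{z,\rho} = \times_{i=1}^J [z^i(d_i-\rho), z^i(u_i-\rho)]$. For fixed $\ga$ the integrand $\eta \mapsto f(\eta + \rho z) - (\ga, \eta)$ is convex, since $f$ is convex and the remaining term is affine; a convex function on a compact polytope attains its maximum at an extreme point, and the extreme points of $\Pi_{z,\rho}$ are exactly the vectors $\eta_I = \xi_I \circ z - \rho z$. Hence, writing $F(\eta) = f(\eta + \rho z)$ (again convex) and noting $F(\eta_I) = f(\xi_I \circ z)$, I obtain
\[
\rho \, (\BC f)(z) = \min_{\ga} \max_I [F(\eta_I) - (\ga, \eta_I)] = \Pi[\{\eta_I\}](F),
\]
which is precisely the quantity \eqref{eq10mainminmaxoparbd} evaluated on the finite family $\{\eta_I\}_{I \subset \{1,\dots,J\}}$ and the convex premium $F$.

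Next I would check that $\{\eta_I\}$ satisfies the two hypotheses of Theorem \ref{thriskneutrald}. Strong positive completeness is immediate: since $d_i < \rho < u_i$, the origin lies in the interior of the box $\Pi_{z,\rho}$, hence in the interior of the convex hull of the vertices $\{\eta_I\}$, and no nonzero $\om$ can satisfy $(\om, \eta_I) \ge 0$ for all $I$. General position transfers from the stated hypothesis: $\eta_I = (\xi_I - \rho\1) \circ z$ is the image of $\xi_I - \rho\1$ under the diagonal map with nonzero entries $z^i$, which preserves linear independence, so the assumption that any $J$ of the vectors $\xi_{I_k} - \rho\1$ are independent forces any $J$ of the $\eta_I$ to be independent as well. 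With both hypotheses in force, Theorem \ref{thriskneutrald} gives $\Pi[\{\eta_I\}](F) = \max_\mu \E_\mu F$, the maximum being over the extreme points $\mu$ of the risk-neutral laws on $\{\eta_I\}$, which by Proposition \ref{propextremriskneutr} are supported on strongly positively complete subsets of size exactly $J+1$ and are unique on each such support by Proposition \ref{propbasriskneut2}.

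Finally I would translate this back to the $\xi$-picture. A law $\{p_I\}$ is risk-neutral with respect to the origin for $\{\eta_I\}$, i.e. $\sum_I p_I \eta_I = 0$, precisely when $\sum_I p_I (\xi_I - \rho\1)\circ z = 0$; dividing coordinatewise by $z^i \ne 0$ this reads $\sum_I p_I \xi_I = \rho\1$, which is \eqref{eq2thbasicEuroprainbowBel}. A support of size $J+1$ that is strongly positively complete for $\{\eta_I\}$ means the origin is interior to the simplex it spans, equivalently (undoing the shift and scaling) that $\rho\1$ is an interior point of the convex hull of the corresponding vertices $\xi_{I_1}, \dots, \xi_{I_{J+1}}$ of $\Pi$; these are exactly the collections $\Om$ in the statement. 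Since $\E_\mu F = \sum_I p_I f(\xi_I \circ z) = \E_\Om f(\xi \circ z)$, collecting the factor $1/\rho$ yields \eqref{eq1thbasicEuroprainbowBel}. The only point demanding care — the step I expect to be the true crux — is the convexity reduction to vertices together with the verification that general position is genuinely inherited by $\{\eta_I\}$; everything after that is a transcription of Theorem \ref{thriskneutrald} through the diagonal change of variables.
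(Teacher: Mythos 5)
Your proof is correct and takes essentially the same route as the paper: the paper's own argument is exactly the discussion preceding the theorem, namely the change of variables to $\eta_I=\xi_I\circ z-\rho z$, the observation that $d_i<\rho<u_i$ puts the origin in the interior of the box (strong positive completeness), an application of Theorem \ref{thriskneutrald}, and the translation back to the $\xi$-variables. You have simply made explicit the details the paper leaves implicit (the convexity reduction of the inner maximum to the vertices, the preservation of general position under the diagonal map with entries $z^i>0$, and the identification of risk-neutrality with respect to the origin for $\{\eta_I\}$ with risk-neutrality with respect to $\rho\1$ for $\{\xi_I\}$).
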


Risk-neutrality now corresponds to its usual meaning in finance, i.e.
\eqref{eq2thbasicEuroprainbowBel} means that all discounted
stock prices are martingales.

Notice that the $\max$ in \eqref{eq1thbasicEuroprainbowBel} is over a finite
number of explicit expressions, which is of course a great achievement as compared
with initial minmax over an infinite set. In particular, it reduces the calculation
of the iterations $\BC^n f$ to the calculation on a controlled Markov chain.
Let us also stress that the number of eligible $\Om$ in \eqref{eq1thbasicEuroprainbowBel} is
the number of different pyramids (convex polyhedrons with $J+1$ vertices) with vertices
taken from the vertices of $\Pi$ and containing $\rho \1$ as an interior point. Hence this number can be effectively
calculated.

Let us point our some properties of the operator $\BC$ given by
\eqref{eq1thbasicEuroprainbowBel} that are obvious, but important for practical calculations:
 $\rho \BC$ is {\it non-expansive}:
\[
\| \BC (f_1)- \BC (f_2) \| \le \frac{1}{\rho} \| f_1-f_2\|,
\]
and {\it homogeneous} (both with respect to addition and multiplication):
\[
\rho \BC (\la + f)=\la + \rho \BC (f), \quad \BC (\la f)=\la \BC (f)
 \]
for any function $f$ and $\la \in \R$ (resp. $\la >0$) for the first (resp second) equation.

Next, if $f_p$ is a power function, that is
\[
f_p(z)= (z^1)^{i_1} \cdots (z^J)^{i_J},
\]
then $f_p(\xi \circ z)= f_p(\xi)f_p(z)$, implying
\begin{equation}
\label{eqBelonpowers}
(\BC ^n f_p)(z)=((\BC f_p)(\1))^n f_p(z).
\end{equation}
Therefore, power functions are invariant under $\BC$ (up to a multiplication by a constant).
Consequently, if for a payoff $f$ one can find a reasonable approximation by a power function,
that is there exists a power function $f_p$ such that
$\|f-f_p\| \le \ep$, then
\begin{equation}
\label{eqBelonpowersap}
\|\BC ^n f- \la ^n f_p \|\le \frac{1}{\rho ^n} \|f-f_p\| \le \frac{\ep}{\rho^n}, \quad \la = (\BC f_p) (\1),
\end{equation}
so that an approximate calculation of $\BC ^n f$ is reduced to the calculation of one number
$\la $. This implies the following scheme for an approximate evaluation of $\BC$: first
find the best fit to $f$ in terms of functions $\al +f_p$ (where $f_p$ is a power function and $\al$
a constant), and then use \eqref{eqBelonpowersap}.

\end{document}